\newtheorem{theorem}{Theorem}
\newtheorem{corollary}{Corollary}
\newtheorem{proposition}{Proposition}
\newtheorem{example}{Example}
\newtheorem{remark}{Remark}
\title[Radiotherapy fractionation problem]{Analytical solution to the radiotherapy fractionation problem including \\ dose bound constraints}
\author{Luis A. Fern\'andez and Luc\'{\i}a Fern\'andez}
\dedicatory{Dedicated to the memory of Juan Antonio Fern\'andez (1957-2018)}
\address{Dep. Mathematics, Statistics and Computation \\ University of Cantabria (SPAIN)}
\email{lafernandez@unican.es}
\thanks{The work of the first author was supported by the Spanish ``Ministerio de Econom\'{\i}a, Industria y Competitividad" under project  MTM2017-83185-P}
\begin{document}

\begin{abstract}
This paper deals with the classic radiotherapy dose fractionation problem for cancer tumors concerning the following goals:
\begin{itemize}
\item [a)] To maximize the effect of radiation on the tumor, restricting the effect produced to the organs at risk (healing approach).
\item [b)] To minimize the effect of radiation on the organs at risk, while maintaining enough effect of radiation on the tumor (palliative approach).
\end{itemize}
We will assume the linear-quadratic model to characterize the radiation effect and consider the stationary case (that is, without taking into account
the timing of doses and the tumor growth between them).

The main novelty with respect to previous works concerns the presence of minimum and maximum dose fractions,
to achieve the minimum effect and to avoid undesirable side effects, respectively.

We have characterized in which situations is more convenient the hypofractionated protocol (deliver few fractions with high dose per
fraction) and in which ones the hyperfractionated regimen (deliver a large number of lower doses of radiation) is the optimal strategy.

In all cases, analytical solutions to the problem are obtained in terms of the data.
In addition, the calculations to implement these solutions are elementary and can be carried out using a pocket calculator.
\end{abstract}
\keywords{Radiotherapy, fractionation, mixed and continuous optimization, linear quadratic model.}
\subjclass{92C50, 90C20, 90C90}
\maketitle
\section{Introduction}
According to the World Health Organization \cite{WHO}, ``radiotherapy is one of the major treatment options in cancer management. (...)  Together with other modalities such as
surgery and chemotherapy it plays an important role in the treatment of $40\%$ of those patients who are cured of their cancer. Radiotherapy is also a highly effective
treatment option for palliation and symptom control in cases of advanced or recurrent cancer. The process of radiotherapy is complex and involves understanding of the
principles of medical physics, radiobiology, radiation safety, dosimetry, radiotherapy planning, simulation and interaction of radiation therapy with other treatment
modalities".

Mathematical modelling has played an important role in understanding and optimizing radiation delivery for cancer treatment.
Since its formulation more than $50$ years ago, the linear-quadratic (LQ) model has become the preferred method for characterizing radiation effects.
Usually, it is stated as follows: the survival probability $S$ of a tumor cell after exposure to a single dose of radiation of $d \ Gy$ is expressed as
$$ S = \exp{\left(-\alpha_{T} d-\beta_{T} d^{2}\right)}, $$
where $\alpha_{T}$ and $\beta_{T}$ are two positive parameters describing the radiosensitivity of the cell, \cite{ref3}.
It is well known that these parameters depend on the type of radiation therapy chosen and also on the organ where the tumor is located \cite{ref4}.
More precisely, LQ model implies that if the initial size of the tumor is $U$, then it will be $U \cdot S$ after applying a $d \ Gy$ dose.
Let us recall that ``Gray" ($Gy$) is the unit of ionizing radiation dose in the International System of Units.

LQ model has well documented predictive properties for fractionation/dose rate effects in the laboratory and
``it is reasonably well validated, experimentally and theoretically, up to about $10 \ Gy$  per fraction and would be reasonable for use up
to about $18 \ Gy$ per fraction", see \cite{Brenner2008}. Precisely, its range of validity is a key point of controversy; although
there is a general consensus on the existence of this range, significant disagreements
remain on the exact values of its limits. Let us illustrate this fact with other recent quotes:
from  \cite{ref3}, ``in vitro (...) some authors suggesting significant discrepancies at doses of $5 \ Gy$ or above, while others
report good agreement up to tens of Gy" and according to the French Society of Young Radiation Oncologists,  ``the dose / fraction must be between $1$ and $6 \ Gy$", see \cite{ref8}.

Given $N$ doses, $d_{1},...,d_{N}$, eventually different, if we consider the stationary case (which means that neither the times of application of the doses
nor the growth of the tumor produced between them are taken into account), the probability of accumulated survival is given by
\begin{equation}
S^{N}=\exp{\left(-\sum_{i=1}^{N} \left(\alpha_{T} d_{i} + \beta_{T} d_{i}^{2}\right)\right)}.
\label{ec1}
\end{equation}

From here it is clear that the effect of radiation on the tumor is determined by the quadratic function
\begin{equation}
E_{T}(N,d) = \alpha_{T}\sum_{i=1}^{N} d_{i} +\beta_{T} \sum_{i=1}^{N} d_{i}^{2}.
\label{ET}
\end{equation}

On the other hand, radiation also affects healthy organs and tissues near the tumor (which we will denote by OAR, organs at risk, hereafter).
In general, healthy organs and tissues receive less radiation than the tumor, which we will denote by $\delta d$, with $\delta \in (0,1]$ being the so-called ``sparing factor".
The value of $\delta$ depends on factors such as the location and geometry of the tumor and also on
the technology used to deliver the radiation, see \cite{Bortfeld2015}. It can be seen as a measure of the accuracy of the radiotherapy:
if clinicians can keep the OAR almost unaffected by the radiation, $\delta$ will be about $0$; if not,
it will be larger, until reaching the value $\delta \approx 1$ at worst.
Therefore, the effect of the radiation on the OAR is determined by the following function

\begin{equation}
E_{OAR}(N,d) = \alpha_{0}\delta \sum_{i=1}^{N} d_{i} + \beta_{0}\delta^2  \sum_{i=1}^{N} d_{i}^{2},
\label{EOAR}
\end{equation}
where $\alpha_{0}$ and $\beta_{0}$ are the parameters associated to the healthy organs that we are trying to protect.

Typical values for $\alpha_{0}, \beta_{0}, \alpha_{T}$ and $\beta_{T}$ can be found in the specialized literature such as \cite{ref4}.
These data come from conducting experiments and the corresponding adjustments
(least squares regression) to achieve approximated values that best fit experimental data.

Let us now introduce the most common strategies for fractionating radiotherapy treatments:
\begin{itemize}
\item {\bf Hypofractionation}: Deliver higher doses of radiation on few occasions.
This strategy results in a significant reduction of its duration.
\item {\bf Hyperfractionation}: Deliver a large number of lower doses of radiation
that are given more than once a day.
\end{itemize}

In this paper we study the classic radiotherapy dose fractionation problem related to the following goals:
\begin{itemize}
\item [a)] To maximize the effect of radiation on the tumor, restricting the effect produced on the OAR (healing approach) in Section 2 and
\item [b)] To minimize the effect of radiation on the OAR, maintaining enough effect of radiation on the tumor (palliative approach) in Section 3.
\end{itemize}

The first novelty with respect to previous works in this framework concerns the presence of dose fraction bounds of the type $0 < d_{min} \leq d \leq d_{max}$.
On one hand, these restrictions are connected to the range of validity of the aforementioned LQ model and can be estimated  for each particular tumor; on the other hand, they also take into account the minimum and maximum dose fraction that can be applied in practical situations in order to achieve a minimum effect and avoid undesirable side effects, respectively.
It is well known that the dose per fraction value in most conventional treatments is around $2  \ Gy,$ see for instance \cite{RDF2019}. Depending on the tumor type, the values of  $d_{min}$ and $d_{max}$
can be tunned, but the reference values  $d_{min} = 1 \ Gy$ and $d_{max} = 6 \ Gy$ could be a valid generic choice. In this sense one can not find in \cite{RDF2019} a single treatment recomendation with a dose fraction less than $1 \ Gy$ and very few larger than $6 \ Gy.$

The counterpart for imposing a positive minimum dose fraction is that the total number of radiations $N$ should not be fixed a priori and this is the second important novelty of this work: $N$ will also be considered another unknown of the problem and we will study the dependence of the solution with respect to $N$. Among others, this approach was followed by \cite{Jones1995}, but only for uniform dose treatments. Our approach here includes also the study for nonuniform protocols. A preliminary version of our results was presented by the second author as part of the academic project \cite{TFG}, except for the study of the dependence of the solution with respect to $N$ which is new.

Summarizing, new analytical solutions in terms of the data are obtained for both problems, improving known results in the literature to the best of our knowledge, see for instance \cite{Mizuta2012} and \cite{Saberian2015}. Moreover, the final calculations to implement these solutions are elementary and can be carried out using a pocket calculator.

\section{Maximizing the effect of radiation on the tumor}
\label{Smax}

The aim of this first problem is to determine the best strategy to maximize the effect of radiation on the tumor, while restricting the effect on the OAR (healing approach):
$$ (P_1) \left\{ \begin{array}{ll} \begin{aligned}
& \textrm{Maximize } E_{T}(N,d),  \\
& \textrm{subject to } N \in \mathbb{N}, d \in \mathbb{R}^{N} \textrm{ such that  } \\
& E_{OAR}(N,d) \leq \gamma_{OAR}, \\
& d_{min}\leq d_{i}\leq d_{max}, \ i=1,...,N,
\end{aligned}
\end{array}
\right.$$
where $E_{T}(N,d)$ is given by (\ref{ET}), $E_{OAR}(N,d)$ by (\ref{EOAR}) and $d_{min}, d_{max}$ and $\gamma_{OAR}$ are a priori known positive parameters, that should be provided by the specialists.
Roughly speaking, the restriction $E_{OAR}(N,d) \leq \gamma_{OAR}$ can be interpreted in the sense that the percentage of survival cells of the OAR should be greater than or equal to
$100 \exp{(-\gamma_{OAR})}.$

This is the classic fractionation problem that has been studied (with some variations) in several works, see for example the recent papers \cite{Bertuzzi2013} and \cite{Saberian2015} (where more than one OAR is considered) and the references therein. The first novelty of our approach is that dose bound constraints are also included. Usually in the literature the lower bound $0$ value is taken for $d_i$ and no upper bound is imposed; some exceptions are \cite{Bruni2015} and \cite{Bruni2019} where an upper bound is included, but not a positive lower bound. The danger of losing control of the tumor, due to the use of doses below a critical limit, has already been pointed out by \cite{Jones1995}. In addition, our approach to the problem is more useful since the number of doses $N$ is not initially set as in  \cite{Bertuzzi2013} and \cite{Saberian2015}. The case including repopulation was studied in \cite{Bortfeld2015}, only assuming the non-negativity of $d_i$.

From a mathematical point of view, this is a mixed optimization problem involving a discrete variable,  $N \in \mathbb{N}$, which corresponds to the number of radiation doses, and $N$ continuous variables, $d_{i} \in \mathbb{R}, 1 \leq i \leq N$, which are the doses. In other words, this problem has the peculiarity of having a variable number of unknowns.

Along this paper, it will be denoted
\begin{equation}
\varphi_{0}(r)=\alpha_{0} \delta r + \beta_{0} \delta^{2} r^{2},
\label{var0}
\end{equation}
\begin{equation}
\lambda_{0} = max\left\lbrace  1, \dfrac{\gamma_{OAR}}{\varphi_{0}(d_{max})} \right\rbrace,
\label{lam0}
\end{equation}
and
\begin{equation}
\rho_{0} = \dfrac{\gamma_{OAR}}{\varphi_{0}(d_{min})}.
\label{rho0}
\end{equation}

Also,  we will denote by $\lfloor x \rfloor$ the greatest integer less than or equal to $x$ and by $\lceil x \rceil$ the least integer greater than or equal to $x$.
Finally, the notation $d^N = (d_0,\ldots,d_0)$ means that $d^N \in \mathbb{R}^{N}$ having all the $N$ components equal to $d_0$.

\subsection{Existence of solution for $(P_1)$}
\begin{theorem}
Let us assume $d_{min}>0$ and $\rho_{0} \geq 1$. Then, the problem $(P_{1})$ has (at least) one solution.
\label{teo1}
\end{theorem}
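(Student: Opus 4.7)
The plan is to reduce the mixed optimization problem to a finite family of finite-dimensional continuous optimization problems, and then apply the Weierstrass extreme value theorem to each.

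First I would verify that the feasible set of $(P_1)$ is non-empty. The hypothesis $\rho_{0} \geq 1$ reads $\varphi_{0}(d_{min}) \leq \gamma_{OAR}$, so $(N,d) = (1,d_{min})$ is admissible, giving a non-empty feasible set.

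The crucial step is to bound the discrete variable $N$ from above. Since every admissible $(N,d)$ must satisfy $d_i \geq d_{min} > 0$ for all $i$, and since the map $r \mapsto \varphi_{0}(r)$ is increasing on $(0,\infty)$, I would estimate
\begin{equation*}
E_{OAR}(N,d) = \sum_{i=1}^{N}\bigl(\alpha_{0}\delta d_{i} + \beta_{0}\delta^{2} d_{i}^{2}\bigr) \geq \sum_{i=1}^{N} \varphi_{0}(d_{min}) = N\,\varphi_{0}(d_{min}).
\end{equation*}
Combining this with the OAR constraint yields $N \leq \gamma_{OAR}/\varphi_{0}(d_{min}) = \rho_{0}$; hence only finitely many values $N \in \{1,2,\ldots,\lfloor \rho_{0} \rfloor\}$ need to be considered, and this set is non-empty thanks again to $\rho_{0} \geq 1$.

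For each such fixed $N$, the restricted feasible set
\begin{equation*}
F_{N} = \bigl\{ d \in [d_{min},d_{max}]^{N} \,:\, E_{OAR}(N,d) \leq \gamma_{OAR} \bigr\}
\end{equation*}
is a closed (intersection of $[d_{min},d_{max}]^{N}$ with the sublevel set of the continuous function $E_{OAR}(N,\cdot)$) and bounded subset of $\mathbb{R}^{N}$, hence compact. Since $E_{T}(N,\cdot)$ is continuous on $\mathbb{R}^{N}$, the Weierstrass theorem yields, for every $N$ with $F_{N} \neq \emptyset$, a maximizer $d^{*}(N) \in F_{N}$ with optimal value $M_{N} = E_{T}(N, d^{*}(N))$. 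Taking $N^{*} \in \{1,\ldots,\lfloor\rho_{0}\rfloor\}$ that maximizes the finite collection $\{M_{N}\}$ (which exists by $N=1$ being admissible), the pair $(N^{*}, d^{*}(N^{*}))$ solves $(P_{1})$.

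I do not foresee any genuine obstacle: the entire argument hinges on the elementary but essential observation that the positive lower bound $d_{min}>0$ together with the OAR constraint forces $N$ to lie in a finite range, thereby turning an apparently infinite-dimensional mixed problem into a finite comparison of compact continuous optimizations.
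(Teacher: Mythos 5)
Your proposal is correct and follows essentially the same route as the paper: bound $N$ by $\rho_{0}$ using $E_{OAR}(N,d)\geq N\varphi_{0}(d_{min})$, solve each fixed-$N$ problem by Weierstrass on the compact feasible set, and compare the finitely many optimal values. The paper merely adds an explicit case split on $\rho_{0}=1$, $\rho_{0}\in(1,2)$ and $\rho_{0}\geq 2$ (identifying the trivial solutions in the first two cases), which your uniform argument subsumes for the purpose of existence.
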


\begin{proof}
Taking into account the restrictions for $(P_{1})$ and that $d_{min}>0$, we have $N \leq \rho_{0}.$
Hence, the set of feasible values for $N$ is finite.

If $\rho_0 = 1$, the solution is $(N,d) = (1,d_{min})$, because this is the only admissible pair  for $(P_{1})$.

When $\rho_0 \in (1,2),$ the value $N =1$ is still the only possible option. Consequently, we are faced with a maximizing problem of an increasing $1D$ function.
Then, the solution will be given by the largest feasible value. In this case, it is quite easy to verify that the unique solution of $(P_1)$ is the pair
$(1,\min{\{d_{max},\overline{d}_{0}\}}),$ where $\overline{d}_{0}=\dfrac{-\alpha_{0} + \sqrt{\alpha^2_{0}+4\beta_{0} \gamma_{OAR}}}{2\beta_{0} \delta}$. Let us stress that $\varphi_0(\overline{d}_{0}) = \gamma_{OAR}$.

If $\rho_0 \geq 2$ we can reduce the problem $(P_{1})$ to a finite collection
of continuous optimization problems $(P_{1}^{N})$ with fixed $N$ given by:
$$(P_{1}^{N}) \left\{
\begin{array}{ll}
\begin{aligned}
& \textrm{Maximize} \hspace{0.4cm} \tilde{E}^N_{T}(d) = \alpha_{T} \sum_{i=1}^{N} d_{i} +\beta_{T} \sum_{i=1}^{N}d_{i}^{2}, \\
& \textrm{subject to } d \in \mathbb{R}^{N} \textrm{ such that }\\
& E_{OAR}(N,d) \leq \gamma_{OAR}, \\
& d_{min}\leq d_{i}\leq d_{max}, \ \ \ \ i=1,...,N.
\end{aligned}
\end{array}
\right. $$
Firstly we will prove the existence of a solution for each problem $(P_{1}^{N})$ (see Theorem \ref{exist}), for $N$ running $\left[ 1,\rho_{0} \right] \cap \mathbb{N}$ and denote it by $\overline{d}^{N}$. Then, it is enough to take the pair  $\left( \overline{N},\overline{d}^{\overline{N}} \right) $ from the finite set  $$\left\lbrace \left( N,\overline{d}^{N}\right): N \in \left[1, \rho_{0} \right] \cap \mathbb{N} \right\rbrace, $$ that maximizes the value of $E_{T}(N,d)$  as a solution to the problem $(P_{1})$.
\end{proof}
The existence of a solution for each problem $(P_{1}^{N})$ is proved below:
\begin{theorem}
Let us assume $d_{min}>0$, $\rho_{0} \geq 2$ and $N \in \left[1, \rho_{0} \right] \cap \mathbb{N}$. Then
 the problem $(P_{1}^{N})$ has (at least) one solution.
\label{exist}
\end{theorem}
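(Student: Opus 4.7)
The plan is to apply the Weierstrass extreme value theorem: I will show that $(P_1^N)$ is the problem of maximizing a continuous function over a nonempty compact subset of $\mathbb{R}^N$, and conclude that a maximizer exists.

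First I would identify the feasible set
\[
\mathcal{F}_N = \left\{ d \in \mathbb{R}^N : d_{min} \leq d_i \leq d_{max} \text{ for all } i,\ E_{OAR}(N,d) \leq \gamma_{OAR} \right\}.
\]
The box $[d_{min},d_{max}]^N$ is closed and bounded, hence compact. Since $d \mapsto E_{OAR}(N,d)$ is continuous on $\mathbb{R}^N$, the sublevel set $\{d : E_{OAR}(N,d) \leq \gamma_{OAR}\}$ is closed, and thus $\mathcal{F}_N$ is the intersection of a compact set with a closed set, which is compact.

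The step that actually uses the hypothesis $N \in [1,\rho_0] \cap \mathbb{N}$ is nonemptiness. Here I would simply test the constant vector $d^N = (d_{min}, \ldots, d_{min})$: clearly the box constraints hold, and using the definition (\ref{var0}) of $\varphi_0$ together with (\ref{EOAR}),
\[
E_{OAR}(N, d^N) = N \, \varphi_0(d_{min}) \leq \rho_0 \, \varphi_0(d_{min}) = \gamma_{OAR},
\]
by the definition (\ref{rho0}) of $\rho_0$ and the assumption $N \leq \rho_0$. So $d^N \in \mathcal{F}_N$, and $\mathcal{F}_N \neq \emptyset$.

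Finally, the objective $\tilde{E}_T^N(d) = \alpha_T \sum_{i=1}^N d_i + \beta_T \sum_{i=1}^N d_i^2$ is a polynomial, hence continuous on $\mathbb{R}^N$. By Weierstrass's theorem it attains its maximum on the nonempty compact set $\mathcal{F}_N$, which yields at least one solution $\overline{d}^N$ of $(P_1^N)$. I do not expect a genuine obstacle in this proof; the only point that needs care is invoking the bound $N \leq \rho_0$ to guarantee feasibility, since without it the constant admissible vector $d^N = (d_{min},\ldots,d_{min})$ could already violate the OAR constraint, and $\mathcal{F}_N$ could be empty (note that this argument makes no use of uniqueness, which is not claimed here).
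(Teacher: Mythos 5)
Your proof is correct and rests on essentially the same tool as the paper's, namely the Weierstrass theorem applied to the compact feasible set; the paper differs only in that for $N \in [1,\lambda_0]\cap\mathbb{N}$ it directly exhibits the maximizer $\overline{d}^N=(d_{max},\ldots,d_{max})$ (information it reuses in later results) and invokes Weierstrass only for $N \in (\lambda_0,\rho_0]\cap\mathbb{N}$. Your uniform argument, with the explicit nonemptiness check via the point $(d_{min},\ldots,d_{min})$ and the bound $N \leq \rho_0$, is complete and in fact supplies a detail the paper leaves implicit.
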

\begin{proof}
For small values of $N$, more precisely $N \in \left[1, \lambda_{0} \right] \cap \mathbb{N}$, it is easy to verify that the solution for $(P_{1}^{N})$ is the trivial one with maximum doses, that is, $ \overline{d}^{N}=(d_{max},...,d_{max}).$
For other values, $N \in \left( \lambda_{0}, \rho_{0} \right] \cap \mathbb{N}$, the existence of solution for $(P_{1}^{N})$ follows from the classic Weierstrass Theorem, because we are maximizing a continuous objective function over a compact set.
\end{proof}

\begin{remark}
\begin{itemize}
\item [a)] Let us point out that $(P_{1})$ is a nonconvex quadratically constrained quadratic optimization problem (even $(P_{1}^{N})$ with fixed $N$), because the objective is to maximize a convex function.
Typically, this type of problems is computationally difficult to solve (see \cite{Saberian2015}), but here we will see that it can be done analytically.
\item [b)] Unless all the components of the solution are the same, the uniqueness of solution fails: it is enough to take two indices $i,j \in \lbrace 1,...,\overline{N} \rbrace$ such that $\overline{d}_{i} \neq \overline{d}_{j}$ and interchange these coordinates to generate a new solution.
\item [c)] Under the condition $\rho_0 < 1$, it is apparent that the set of feasible points is empty and hence, the existence of solution for $(P_1)$ fails.
\item [d)] The hypothesis $d_{min} > 0$ is also needed for proving the existence of solution for $(P_1)$, as it can be shown through the following example:
\begin{example}
$$ (P_{10}) \left\{
\begin{array}{ll}
\textrm{Maximize  }  \displaystyle E_T(N,d)= \sum_{i=1}^{N} d_{i} + \sum_{i=1}^{N} d_{i}^{2}, \\
\textrm{subject to } N \in \mathbb{N}, d_{i} \in \mathbb{R}, \\
\displaystyle E_{OAR}(N,d) = \sum_{i=1}^{N} d_{i} + 2\sum_{i=1}^{N} d_{i}^{2} \leq 10, \\
0 \leq d_{i} \leq 1, i=1,...,N.
\end{array} \right.  $$
 \end{example}
It is clear that for all feasible points we have
\[ E_{T}(N,d)  \leq  E_{OAR}(N,d) \leq 10.\]
Let us stress that here $N$ can take any natural value, without restrictions.
Inspired by Theorem \ref{teo2} below, let us consider the sequence given by
\[ d^N = (d_{0N},...,d_{0N}), \ \ \mbox{ with } \ \ d_{0N} = \dfrac{-1}{4} + \sqrt{\dfrac{1}{16}+\dfrac{5}{N}}. \ \]
It is easy to check that it is feasible for $N \geq 4$,
\[ E_{OAR}(N,d^N) =10, \ \  E_{T}(N,d^N) = 10 - Nd_{0N}^2 \longrightarrow 10, \ \ \mbox{ as } N \rightarrow +\infty.\]
Hence, the problem $ (P_{10})$ can not have solution $(\overline{N},\overline{d})$: the supremum value $10$ can not be attained since
it should happen that
\[  10 + \sum_{i=1}^{\overline{N}} \overline{d}_{i}^2 = E_{T}(\overline{N},\overline{d}) + \sum_{i=1}^{\overline{N}}  \overline{d}_{i}^{2} = E_{OAR}(\overline{N},\overline{d}) \leq 10,\]
which is clearly impossible.
\item [e)] The deficiency of this type of models to produce solutions (when $d_{min} = 0$) that prescribe infinite doses with fractions tending to zero was pointed out by \cite{Jones1995}.
\end{itemize}
\label{R-1}
\end{remark}

The following result provides a simpler version of the optimization problem for the bigger values of $N$:
\begin{theorem}
Let us assume $d_{min}>0$, $\rho_{0} \geq 2$ and $N \in \left( \lambda_{0}, \rho_{0} \right] \cap \mathbb{N}$. Then, the  inequality constraint of the problem $(P_{1}^{N})$ has to be active at $\overline{d}^{N}$, with $\overline{d}^{N}$ being a solution for $(P_{1}^{N})$.
\label{teoAct}
\end{theorem}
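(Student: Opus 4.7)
I will argue by contradiction. Suppose $\overline{d}^{N}$ solves $(P_1^N)$ and that the OAR constraint is strict at $\overline{d}^{N}$, i.e.\ $E_{OAR}(N,\overline{d}^{N}) < \gamma_{OAR}$. The goal will be to perturb a single coordinate of $\overline{d}^{N}$ upward to produce a strictly better feasible point, contradicting optimality.

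The first step is to rule out the trivial maximizer $d^{N} = (d_{max},\ldots,d_{max})$. From the definition (\ref{lam0}) one has $\lambda_{0} \geq \gamma_{OAR}/\varphi_{0}(d_{max})$, so the hypothesis $N > \lambda_{0}$ yields $N\varphi_{0}(d_{max}) > \gamma_{OAR}$. Hence the constant all-$d_{max}$ vector is infeasible for $(P_1^N)$, and consequently there must exist at least one index $j \in \{1,\ldots,N\}$ with $\overline{d}^{N}_{j} < d_{max}$.

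Next, I will exploit the strict monotonicity in each coordinate of both $\tilde{E}^{N}_{T}$ and $E_{OAR}(N,\cdot)$ on the box $[d_{min},d_{max}]^{N}$, which follows at once from $\alpha_{T},\beta_{T},\alpha_{0},\beta_{0},\delta > 0$ together with $d_{min} > 0$ (so each $d_{i}$ stays bounded away from zero and the partial derivatives with respect to $d_{i}$ are strictly positive). Since $E_{OAR}(N,\overline{d}^{N}) < \gamma_{OAR}$ strictly and $\overline{d}^{N}_{j} < d_{max}$, continuity of $E_{OAR}(N,\cdot)$ provides some $\varepsilon > 0$ such that the vector $\tilde{d}$ obtained from $\overline{d}^{N}$ by replacing $\overline{d}^{N}_{j}$ with $\overline{d}^{N}_{j} + \varepsilon \leq d_{max}$ still satisfies $E_{OAR}(N,\tilde{d}) \leq \gamma_{OAR}$ and all the box constraints. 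By strict monotonicity, $\tilde{E}^{N}_{T}(\tilde{d}) > \tilde{E}^{N}_{T}(\overline{d}^{N})$, contradicting the optimality of $\overline{d}^{N}$.

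The argument is essentially routine once the infeasibility of the all-$d_{max}$ vector is established; the only point requiring mild care is handling both branches of $\lambda_{0} = \max\{1,\gamma_{OAR}/\varphi_{0}(d_{max})\}$ uniformly in that opening step, and ensuring the single-coordinate perturbation remains in $[d_{min},d_{max}]^{N}$. I do not anticipate any substantive obstacle here.
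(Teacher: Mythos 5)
Your proof is correct and follows essentially the same route as the paper's: both argue by contradiction, use $N > \lambda_{0}$ to deduce $N\varphi_{0}(d_{max}) > \gamma_{OAR}$ and hence the existence of a coordinate $\overline{d}_{j} < d_{max}$, and then perturb that coordinate upward by a small $\varepsilon$ to obtain a feasible point with strictly larger objective value. Your version merely spells out more explicitly why such an index $j$ must exist, which the paper leaves implicit.
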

\begin{proof}
Arguing by contradiction, let us assume that the constraint is not active, i.e.,
\begin{equation}
\alpha_{0} \delta \sum_{i=1}^{N} \overline{d}_{i} + \beta_{0} \delta^{2} \sum_{i=1}^{N} \overline{d}_{i}^{2} < \gamma_{OAR}.
\label{c1}
\end{equation}
Since $\lambda_{0} < N$, we know that there exists some index $j \in \lbrace 1,...,N \rbrace$ such that $\overline{d}_{j} < d_{max}$. Then, for sufficiently small $\epsilon >0$, the point $(\overline{d}_{1},..., \overline{d}_{j-1},\overline{d}_{j} +\epsilon,\overline{d}_{j+1},...,\overline{d}_{N})$ is feasible and satisfies
$$ \tilde{E}^N_{T}(\overline{d}^{N})<\tilde{E}^N_{T}((\overline{d}_{1},...,\overline{d}_{j-1},\overline{d}_{j} +\epsilon,\overline{d}_{j+1},...,\overline{d}_{N})),$$
but this contradicts the fact that $\overline{d}^{N}$ is a solution for $(P_{1}^{N})$.
\end{proof}
Hence, from now on, in this case we will consider the equality restriction
$$ \alpha_{0} \delta \sum_{i=1}^{N} d_{i} + \beta_{0} \delta^{2} \sum_{i=1}^{N} d_{i}^{2} = \gamma_{OAR}. $$
Therefore, we deduce that
$$ \sum_{i=1}^{N} d_{i}^{2}=\dfrac{1}{\beta_{0} \delta^{2}} \left[ \gamma_{OAR} - \alpha_{0} \delta \sum_{i=1}^{N} d_{i} \right],  $$
and the objective function can be written as
\begin{equation}
\tilde{E}^N_{T}(d)=\left[ \alpha_{T} - \dfrac{\beta_{T}\alpha_{0}}{\beta_{0} \delta} \right] \sum_{i=1}^{N} d_{i} + \dfrac{\beta_{T}\gamma_{OAR}}{\beta_{0}\delta^{2}}.
\label{eq1}
\end{equation}

Based on this identity, we can directly simplify the formulation of the problem $(P_{1}^{N})$ as follows:

 \begin{proposition}
Let us assume $d_{min}>0$, $\rho_{0} \geq 2$, $N \in \left( \lambda_{0}, \rho_{0} \right] \cap \mathbb{N}$ and denote
\begin{equation}
\displaystyle \omega_{\delta} = \frac{\alpha_{T}}{\beta_{T}} - \frac{\alpha_{0}}{\beta_{0} \delta}.
\label{Ew}
\end{equation}
\begin{itemize}
\item [i)] If $\omega_{\delta}>0$, then $(P_{1}^{N})$ is equivalent to
\begin{equation} (P_{1}^{N,+}) \left\{
\begin{array}{ll}
\textrm{Maximize  } \displaystyle \sum_{i=1}^{N} d_{i}, \\
\textrm{subject to } d \in \mathbb{K}_{1}^{N},
\end{array}
\right.
\label{E1}
\end{equation}
where
\begin{equation}
\mathbb{K}_{1}^{N} = \lbrace d \in \mathbb{R}^{N} : E_{OAR}(N,d) = \gamma_{OAR}, d_{min} \leq d_{i} \leq d_{max}, 1 \leq i \leq N \rbrace.
\label{E690}
\end{equation}
\item [ii)] If $\omega_{\delta}<0$, then $(P_{1}^{N})$ is equivalent to
\begin{equation} (P_{1}^{N,-}) \left\{
\begin{array}{ll}
\textrm{Minimize  } \displaystyle \sum_{i=1}^{N} d_{i}, \\
\textrm{subject to } d \in \mathbb{K}_{1}^{N}.
\end{array}
\right.
\label{E2}
\end{equation}
\item [iii)] If $\omega_{\delta}=0,$ then every feasible point for $(P_{1}^{N})$ is a solution.
\end{itemize}
\label{PROP1}
\end{proposition}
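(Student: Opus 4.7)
The plan is to combine Theorem~\ref{teoAct} with identity (\ref{eq1}) to reduce the (nonconvex, quadratic) objective to an affine function of $\sum_{i=1}^{N} d_{i}$ on the active set $\mathbb{K}_{1}^{N}$, and then read off the three cases from the sign of the resulting coefficient.

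First I would invoke Theorem~\ref{teoAct}: since $N \in (\lambda_{0},\rho_{0}] \cap \mathbb{N}$, any solution of $(P_{1}^{N})$ must activate the OAR constraint. Therefore one may, without loss of generality, replace the inequality $E_{OAR}(N,d) \leq \gamma_{OAR}$ by the equality $E_{OAR}(N,d) = \gamma_{OAR}$, i.e.\ restrict to $d \in \mathbb{K}_{1}^{N}$ as defined in (\ref{E690}). This gives the equivalence of $(P_{1}^{N})$ with the problem of maximizing $\tilde{E}^N_{T}$ over $\mathbb{K}_{1}^{N}$.

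Next I would use the algebraic manipulation already carried out just before the statement: on $\mathbb{K}_{1}^{N}$ one can solve the equality constraint for $\sum_{i=1}^{N} d_{i}^{2}$ in terms of $\sum_{i=1}^{N} d_{i}$, which yields (\ref{eq1}):
$$\tilde{E}^N_{T}(d)=\left[\alpha_{T} - \dfrac{\beta_{T}\alpha_{0}}{\beta_{0} \delta}\right]\sum_{i=1}^{N} d_{i} + \dfrac{\beta_{T}\gamma_{OAR}}{\beta_{0}\delta^{2}}.$$
The key observation is that the bracketed coefficient equals $\beta_{T}\,\omega_{\delta}$, since $\alpha_{T} - \beta_{T}\alpha_{0}/(\beta_{0}\delta) = \beta_{T}\bigl(\alpha_{T}/\beta_{T} - \alpha_{0}/(\beta_{0}\delta)\bigr) = \beta_{T}\omega_{\delta}$, and the additive constant $\beta_{T}\gamma_{OAR}/(\beta_{0}\delta^{2})$ is irrelevant for optimization. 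Because $\beta_{T} > 0$, maximizing $\tilde{E}^N_{T}$ over $\mathbb{K}_{1}^{N}$ is the same as maximizing $\omega_{\delta}\sum_{i=1}^{N} d_{i}$.

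Splitting on the sign of $\omega_{\delta}$ then immediately gives the three cases: when $\omega_{\delta}>0$, maximizing $\omega_{\delta}\sum d_{i}$ is equivalent to maximizing $\sum d_{i}$, yielding $(P_{1}^{N,+})$; when $\omega_{\delta}<0$, it is equivalent to minimizing $\sum d_{i}$, yielding $(P_{1}^{N,-})$; and when $\omega_{\delta}=0$, the objective is the constant $\beta_{T}\gamma_{OAR}/(\beta_{0}\delta^{2})$ on $\mathbb{K}_{1}^{N}$, so every feasible point of $(P_{1}^{N})$ that activates the OAR constraint is a solution, and in fact every feasible point of $(P_{1}^{N})$ can be upgraded to such an active one (or, more directly, Theorem~\ref{teoAct} already tells us the optimum is attained on $\mathbb{K}_{1}^{N}$, and there the objective is constant, so the optimum value is the same across all feasible points whose objective matches this constant). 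There is no real obstacle here; the only thing one has to be slightly careful about is the case $\omega_{\delta}=0$, where the statement ``every feasible point is a solution'' has to be read correctly: the whole feasible set of $(P_{1}^{N})$ is either mapped into a single value (on $\mathbb{K}_{1}^{N}$) or into a strictly smaller value (when the OAR constraint is strict), and the former is always achievable, so the maximum equals the constant above and is achieved at every point of $\mathbb{K}_{1}^{N}$.
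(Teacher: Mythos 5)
Your proposal is correct and follows essentially the same route as the paper: the paper's (implicit) proof is precisely the combination of Theorem~\ref{teoAct} with the substitution yielding identity (\ref{eq1}), together with the observation that the bracketed coefficient equals $\beta_{T}\omega_{\delta}$. Your careful reading of case iii) --- that ``every feasible point'' must be understood as a point of $\mathbb{K}_{1}^{N}$, i.e.\ after the equality restriction has been imposed as the paper does just before the proposition --- is the correct resolution of the only subtlety.
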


\begin{remark}
\begin{itemize}
\item [a)] The idea of this transformation can be found in \cite{Mizuta2012} in the context of the problem $(P_{2})$ that we will study in the next section.
\item [b)] Let us note that for the majority of tumors $\alpha_{T}/\beta_{T} > \alpha_{0}/\beta_{0}$ and therefore, the case $\omega_{\delta} > 0$ is more frequent in clinical practice.
\item [c)] As a consequence of Proposition \ref{PROP1}, we can appreciate the great difference between the cases $\omega_{\delta} > 0$ and $\omega_{\delta} < 0$:
in the first one, to maximize the effect of radiation on the tumor we have to increase the total dose, while in the second the total dose remains minimum (see also the Subsection \ref{EQTR} below).
\end{itemize}
\label{R1a}
\end{remark}

\subsection{Solving $(P_{1}^{N})$}
\label{SP1N}

Let us begin by showing a $2D$-example of previous problems that will inspire the general results of this section.
\begin{example}
Let us consider the following optimization problems:
$$ (P_{1}^{2,+}) \left\{
\begin{array}{ll}
\textrm{Maximize } \hspace{0.25cm} \displaystyle d_{1} + d_{2}, \\
\textrm{subject to } (d_{1},d_{2}) \in \mathbb{R}^{2}, \\
2(d_{1}+d_{2}) + d_{1}^{2}+d_{2}^{2}=12, \\
1 \leq d_{1},d_{2} \leq 3.
\end{array}
\right.
\hspace{0.3cm}
(P_{1}^{2,-}) \left\{ \begin{array}{ll}
\textrm{Minimize } \hspace{0.25cm} \displaystyle d_{1} + d_{2}, \\
\textrm{subject to } (d_{1},d_{2}) \in \mathbb{R}^{2}, \\
2 (d_{1}+d_{2}) + d_{1}^{2}+d_{2}^{2}=12, \\
1 \leq d_{1},d_{2} \leq 3.
\end{array}
\right.$$
In the Figure \ref{F1} the points on the blue surface are those that satisfy the equality constraint and
the intersection of blue and orange surfaces gives the curve on which to maximize or minimize.
\end{example}
\begin{figure}[H]
\centering
\includegraphics[scale=0.8]{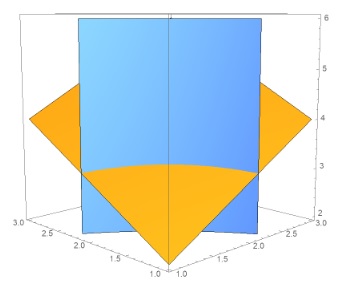}
\caption{}
\centering
\label{F1}
\end{figure}
Visually one can guess that the unique solution to $(P_{1}^{2,+})$ is located on the diagonal (more precisely, it is given by $(\overline{d}_{0},\overline{d}_{0})$ with $\overline{d}_{0}=\sqrt{7}-1$) and there are two solutions of $(P_{1}^{2,-})$ lying on the boundary (specifically, $(\overline{d}_{1},\overline{d}_{2})$ with $\overline{d}_{1}=1$, $\overline{d}_{2} =\sqrt{10}-1$ and $\overline{d}_{1} = \sqrt{10}-1$, $\overline{d}_{2}=1$).
\subsubsection{Solving $(P_{1}^{N,+})$}
In fact, what happens in  previous example can be extended to the $N-$dimensional case. More precisely, we will prove that the solution for $(P_{1}^{N,+})$ is a vector with equal coordinates:

\begin{theorem}
Let us assume  $d_{min}>0$, $\rho_{0} \geq 2$ and $N \in \left( \lambda_{0}, \rho_{0} \right] \cap \mathbb{N}.$ Then, the  unique solution to $(P_{1}^{N,+})$ has the form $\overline{d}^{N}=(\overline{d}_{0},...,\overline{d}_{0})$ with \begin{equation}
\overline{d}_{0}=\dfrac{-\alpha_{0} N + \sqrt{(\alpha_{0}N)^{2}+4\beta_{0} N \gamma_{OAR}}}{2\beta_{0} \delta N}.
\label{E5}
\end{equation}
\label{teo2}
\end{theorem}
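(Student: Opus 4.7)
The plan is to use the Cauchy--Schwarz inequality to turn the linear objective together with the active quadratic constraint into a single quadratic inequality in the total dose $S=\sum_{i=1}^{N} d_{i}$, and then show that the equal-coordinate candidate saturates this inequality and is feasible.

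First I would set $S=\sum_{i=1}^{N}d_{i}$ and use the active constraint $E_{OAR}(N,d)=\gamma_{OAR}$ to eliminate $\sum_{i=1}^{N}d_{i}^{2}$, obtaining
\[
\sum_{i=1}^{N}d_{i}^{2}=\frac{\gamma_{OAR}-\alpha_{0}\delta S}{\beta_{0}\delta^{2}}.
\]
Applying Cauchy--Schwarz in the form $S^{2}\le N\sum_{i=1}^{N}d_{i}^{2}$, I obtain the quadratic inequality
\[
\beta_{0}\delta^{2}S^{2}+N\alpha_{0}\delta S-N\gamma_{OAR}\le 0,
\]
whose positive root provides the sharp upper bound $S\le N\overline{d}_{0}$, with $\overline{d}_{0}$ as in (\ref{E5}). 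Moreover, equality in Cauchy--Schwarz holds if and only if all the components $d_{i}$ are equal, and if they share a common value $c$, then the constraint forces $c=\overline{d}_{0}$ (the unique positive root of $\beta_{0}\delta^{2}c^{2}+\alpha_{0}\delta c-\gamma_{OAR}/N=0$). This simultaneously identifies the unique candidate maximizer and establishes uniqueness.

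The remaining step is to check that $\overline{d}^{N}=(\overline{d}_{0},\ldots,\overline{d}_{0})$ lies in the feasible set $\mathbb{K}_{1}^{N}$, i.e. that $d_{min}\le\overline{d}_{0}\le d_{max}$. By construction $\varphi_{0}(\overline{d}_{0})=\gamma_{OAR}/N$, so I would use monotonicity of $\varphi_{0}$ on $(0,\infty)$ together with the hypothesis $N\in(\lambda_{0},\rho_{0}]$: the bound $N>\lambda_{0}\ge\gamma_{OAR}/\varphi_{0}(d_{max})$ yields $\varphi_{0}(\overline{d}_{0})<\varphi_{0}(d_{max})$, hence $\overline{d}_{0}<d_{max}$; and $N\le\rho_{0}=\gamma_{OAR}/\varphi_{0}(d_{min})$ gives $\varphi_{0}(\overline{d}_{0})\ge\varphi_{0}(d_{min})$, hence $\overline{d}_{0}\ge d_{min}$.

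The main (minor) obstacle is the verification of feasibility through the $\lambda_{0}$ and $\rho_{0}$ thresholds: one has to be careful with the two branches hidden in the definition of $\lambda_{0}$ via the $\max$, but in either case the monotonicity of $\varphi_{0}$ on $[0,\infty)$ does the job. Once feasibility is in hand, the Cauchy--Schwarz argument yields both optimality and uniqueness in one stroke, bypassing any direct KKT analysis.
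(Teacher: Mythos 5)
Your proposal is correct and follows essentially the same route as the paper: the Cauchy--Schwarz bound on $\left(\sum_i d_i\right)^2$ combined with the active constraint yields the quadratic inequality in the total dose, the equality case forces all coordinates to coincide, and feasibility of $\overline{d}_0$ follows from $N\in(\lambda_0,\rho_0]$. The only difference is cosmetic — you phrase the feasibility check via monotonicity of $\varphi_0$, which the paper leaves as a one-line remark.
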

\begin{proof}
By using the Cauchy-Schwarz inequality, we have
$$ \left( \sum_{i=1}^{N} d_{i} \right)^{2} \leq N \left( \sum_{i=1}^{N} d_{i}^{2} \right). $$
Therefore, for each feasible point it follows that
\begin{equation}
\gamma_{OAR} = \alpha_{0} \delta \sum_{i=1}^{N} d_{i} + \beta_{0} \delta^{2} \sum_{i=1}^{N} d_{i}^{2} \geq \alpha_{0} \delta \sum_{i=1}^{N} d_{i} + \dfrac{\beta_{0} \delta^{2}}{N} \left(\displaystyle \sum_{i=1}^{N} d_{i} \right)^{2}.
\label{E10}
\end{equation}
Defining $q(z)=\dfrac{\beta_{0} \delta^{2}}{N}z^{2}+\alpha_{0} \delta z - \gamma_{OAR}$, previous inequality can be rewritten as
\begin{equation}
q\left(\sum_{i=1}^{N}d_{i}\right) \leq 0.
\label{E15}
\end{equation}
Taking into account that the polynomial $q$ can be factorized in the form $q(z)=\dfrac{\beta_{0} \delta^{2}}{N}(z-z_{1})(z-z_{2})$ with $z_1 < 0 < z_2$, we know that the relation
(\ref{E15}) holds if and only if $\sum_{i=1}^{N} d_{i} \in [0,z_2]$, because all the components $d_i$ have to be positive.

Now it is clear that the maximum value is achieved when $\sum_{i=1}^{N} \overline{d}_{i} = z_2$.
Combining this fact with (\ref{E10}), we deduce that
\begin{equation}
\gamma_{OAR} = \alpha_{0} \delta \displaystyle \sum_{i=1}^{N}\overline{d}_{i} + \beta_{0} \delta^{2} \sum_{i=1}^{N}\overline{d}_{i} ^{2} \geq
\alpha_{0} \delta \sum_{i=1}^{N} \overline{d}_{i} + \dfrac{\beta_{0} \delta^{2}}{N} \left( \sum_{i=1}^{N}\overline{d}_{i}\right) ^{2} = \gamma_{OAR}.
\label{E20}
\end{equation}
Hence
\begin{equation}
\left( \sum_{i=1}^{N} \overline{d}_{i}\right) ^{2} = N \sum_{i=1}^{N} \overline{d}_{i} ^{2}.
\label{E25}
\end{equation}
In this case, Cauchy-Schwarz inequality becomes (in fact) an equality and this is true if and only if all the components are equal, i. e., $\overline{d}_{1}=...=\overline{d}_{N}.$
Therefore, $\overline{d}^{N}=(\overline{d}_{0},...,\overline{d}_{0})$ with $\overline{d}_{0}=\dfrac{z_{2}}{N}$ and (\ref{E5}) holds.

Let us emphasize that $\overline{d}_{0}$ satisfies $d_{min} \leq \overline{d}_{0} \leq d_{max}$, thanks to the hypothesis $N \in \left( \lambda_{0}, \rho_{0} \right]$. \end{proof}

\subsubsection{Solving $(P_{1}^{N,-})$}

Given $\overline{d}$ a solution of $(P_{1}^{N,-})$, since the objective function and the functions defining the restrictions are $C^{1}$, we can apply the Lagrange Multipliers Rule \cite{ref5} to deduce the existence of real numbers $\overline{\alpha} \in [0,+\infty), \overline{\lambda} \in \mathbb{R}$ and $\lbrace \overline{\mu}_{i} \rbrace_{i=1}^{2N} \subset [0,+\infty)$ verifying
\begin{equation}
\overline{\alpha}+ |\overline{\lambda}|+\sum_{i=1}^{2N}\overline{\mu}_{i} >0,
\label{l1}
\end{equation}
\begin{equation}
\overline{\alpha} \left(\begin{array}{c} 1 \\ \vdots \\ 1 \end{array} \right) + \overline{\lambda}  \left(\begin{array}{c} \alpha_{0} \delta + 2\beta_{0} \delta^{2} \overline{d}_1 \\ \vdots \\ \alpha_{0} \delta + 2\beta_{0} \delta^{2} \overline{d}_N \end{array} \right) + \left(\begin{array}{c} \overline{\mu}_{N+1}-\overline{\mu}_{1} \\ \vdots \\  \overline{\mu}_{2N}-\overline{\mu}_{N} \end{array} \right) = \left(\begin{array}{c} 0 \\ \vdots \\ 0 \end{array} \right),
\label{l2}
\end{equation}
\begin{equation}
\overline{\mu}_{i}(d_{min}-\overline{d}_i) = 0, \ \ \overline{\mu}_{i+N}(\overline{d}_i-d_{max}) = 0, \ 1 \leq i \leq N.
\label{l3}
\end{equation}

Inspired by the 2D example, we will prove that $\overline{d}$ lies on the boundary of $[d_{min},d_{max}]^N$. Let us argue by contradiction assuming that $\overline{d}_{i} \in (d_{min},d_{max})$, for all $i \in \lbrace 1,...,N \rbrace$. Then, thanks to (\ref{l3}) we deduce that $\overline{\mu}_{i}=0, \forall i \in \lbrace 1,...,2N \rbrace.$
In this case, (\ref{l2}) reads:
\begin{equation}
\overline{\alpha} \left(\begin{array}{c} 1 \\ \vdots \\ 1 \end{array} \right) + \overline{\lambda}  \left(\begin{array}{c} \alpha_{0} \delta + 2\beta_{0} \delta^{2} \overline{d}_1 \\ \vdots \\ \alpha_{0} \delta + 2\beta_{0} \delta^{2} \overline{d}_N \end{array} \right) = \left(\begin{array}{c} 0 \\ \vdots \\ 0 \end{array} \right).
\label{l33}
\end{equation}
If $\overline{\lambda} = 0$, the identity (\ref{l33}) implies that $\overline{\alpha} = 0,$ but this is not possible by (\ref{l1}). Therefore $\overline{\lambda} \neq 0$ and from (\ref{l33}) we get
$$ \overline{d}_{1}=...=\overline{d}_{N}=\dfrac{-(\overline{\alpha}+\overline{\lambda} \alpha_{0} \delta)}{2\overline{\lambda}\beta_{0} \delta^{2}}. $$
In other words, we arrive to the solution of problem $(P_{1}^{N,+})$, contradicting our initial hypothesis about $\overline{d}$.

Consequently, there exists (at least) one index $j \in \lbrace 1,...,N \rbrace$ such that $\overline{d}_{j} \in \lbrace d_{min}, d_{max} \rbrace$. Without loss of generality we can suppose that  $j=N$.
Let us see that in this case we can reduce the dimension of the optimization problem $(P_{1}^{N,-})$ by means for the following auxiliary problem:
$$ (P_{1}^{N-1,-}) \left\{
\begin{array}{ll}
\begin{aligned}
& \textrm{Minimize} \hspace{0.5cm} \sum_{i=1}^{N-1} d_{i} + \overline{d}_{N}, \\
& \textrm{subject to } d \in \mathbb{R}^{N-1} \textrm{ such that } \\
& E_{OAR}(N-1,d) = \gamma_{OAR} - \alpha_{0} \delta \overline{d}_{N} - \beta_{0} \delta^{2} \overline{d}_{N}^{2},\\
& \hspace{0.1cm} d_{min} \leq d_{i} \leq d_{max}, \hspace{0.4cm} i=1,...,N-1.
\end{aligned}
\end{array}
\right. $$
\begin{proposition}
Assume that $\overline{d}=(\overline{d}_{1},...,\overline{d}_{N})$ is a solution of $(P_{1}^{N,-})$. Then $(\overline{d}_{1},...,\overline{d}_{N-1})$ is a solution of $(P_{1}^{N-1,-})$.
\end{proposition}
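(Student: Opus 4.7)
The plan is a standard ``extract one coordinate, reduce, and argue by contradiction'' restriction argument, so the proof should be short.

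First I would check that $(\overline{d}_1,\ldots,\overline{d}_{N-1})$ is at least feasible for $(P_1^{N-1,-})$. The box constraints $d_{min}\le \overline{d}_i\le d_{max}$ for $i=1,\ldots,N-1$ are inherited directly from the feasibility of $\overline{d}$ in $(P_1^{N,-})$. For the equality constraint, I would split the sum defining $E_{OAR}(N,\overline{d})$ as the sum over $i=1,\ldots,N-1$ plus the single term $\alpha_0\delta\overline{d}_N+\beta_0\delta^2\overline{d}_N^2$, and use that the whole thing equals $\gamma_{OAR}$. Solving for the first part gives exactly the right-hand side of the equality constraint in $(P_1^{N-1,-})$, so feasibility is immediate.

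Then I would argue optimality by contradiction. Suppose there were some $d^{*}=(d_1^{*},\ldots,d_{N-1}^{*})$ feasible for $(P_1^{N-1,-})$ with
\[
\sum_{i=1}^{N-1} d_i^{*} + \overline{d}_N \;<\; \sum_{i=1}^{N-1} \overline{d}_i + \overline{d}_N.
\]
I would then lift $d^{*}$ back to $\mathbb{R}^N$ by appending $\overline{d}_N$, forming $\widetilde{d}=(d_1^{*},\ldots,d_{N-1}^{*},\overline{d}_N)$. The box constraints hold for the first $N-1$ coordinates by feasibility of $d^{*}$ in $(P_1^{N-1,-})$ and for the last by feasibility of $\overline{d}$ in $(P_1^{N,-})$. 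The equality constraint $E_{OAR}(N,\widetilde{d})=\gamma_{OAR}$ follows by reversing the splitting from the previous paragraph: adding the $\overline{d}_N$ contribution to $E_{OAR}(N-1,d^{*})$ exactly reconstitutes $\gamma_{OAR}$ by the reduced equality constraint satisfied by $d^{*}$. Hence $\widetilde{d}$ is feasible for $(P_1^{N,-})$ with objective value strictly less than $\sum_{i=1}^{N}\overline{d}_i$, contradicting the optimality of $\overline{d}$.

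There is no real obstacle here; the one thing worth being careful about is bookkeeping the objective of $(P_1^{N-1,-})$, whose written form $\sum_{i=1}^{N-1}d_i+\overline{d}_N$ contains the constant $\overline{d}_N$. Since this constant appears identically on both sides of the inequality above, it plays no role beyond making the correspondence with the $N$-dimensional objective transparent. The fact that $\overline{d}_N\in\{d_{min},d_{max}\}$, established in the paragraphs preceding the statement, is not needed for this particular proposition; it only plays its role later, when the reduced problem is solved recursively.
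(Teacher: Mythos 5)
Your proposal is correct and follows essentially the same route as the paper: verify feasibility of the truncated vector, then compare with an arbitrary feasible point of the reduced problem by lifting it back via appending $\overline{d}_N$ and invoking optimality of $\overline{d}$ (the paper phrases this directly rather than by contradiction, but the argument is identical). Your extra care in explicitly checking feasibility of $(\overline{d}_1,\ldots,\overline{d}_{N-1})$ and in noting that $\overline{d}_N\in\{d_{min},d_{max}\}$ is not needed here are both accurate observations.
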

\begin{proof}
Every feasible point $(d_{1},...,d_{N-1})$ for the problem $(P_{1}^{N-1,-})$ satisfies
$$ E_{OAR}(N-1,d) = \gamma_{OAR} - \alpha_{0} \delta \overline{d}_{N} - \beta_{0} \delta^{2} \overline{d}_{N}^{2}. $$
This implies that $(d_{1},...,d_{N-1},\overline{d}_{N})$ is a feasible point for $(P_{1}^{N,-}).$
Hence, using that $\overline{d}$ is a solution of $(P_{1}^{N,-})$, we get
$$ \sum_{i=1}^{N-1} \overline{d}_{i} \leq \sum_{i=1}^{N-1} d_{i}, $$
which implies that $(\overline{d}_{1},...,\overline{d}_{N-1})$ is a solution of $(P_{1}^{N-1,-})$.
\end{proof}

Arguing exactly in the same form as before with the problem $(P_{1}^{N-1,-})$, we deduce that there must be an index $j \in \lbrace 1,...,N-1 \rbrace$ such that $\overline{d}_{j} \in \lbrace d_{min},d_{max} \rbrace$ and we can reduce again the dimension of the problem, obtaining a new problem with  $N-2$ unknowns. Repeating this process several times we arrive to the final $1D$ problem:
$$ (P_{1}^{1,-}) \left\{
\begin{array}{ll}
\begin{aligned}
& \textrm{Minimize} \hspace{0.5cm} d_{1} + \sum_{i=2}^{N} \overline{d}_{i}, \\
& \textrm{subject to } d_{1} \in \mathbb{R} \textrm{ such that } \\
& \alpha_{0} \delta d_{1} + \beta_{0} \delta^{2} d_{1}^{2} = \gamma_{OAR} - \alpha_{0} \delta \sum_{i=2}^{N} \overline{d}_{i} - \beta_{0} \delta^{2} \sum_{i=2}^{N} \overline{d}_{i}^{2}, \\
& \hspace{0.1cm} d_{min} \leq d_{1} \leq d_{max}.
\end{aligned}
\end{array}
\right. $$
Clearly, it is enough to solve the quadratic equation to get the solution.

Summarizing previous results, given $N \in \left( \lambda_{0}, \rho_{0} \right] \cap \mathbb{N}$,
the solution of $(P_{1}^{N,-})$ has one of the following structures:
\begin{equation}
\overline{d}^{N}=(\underbrace{d_{min},...,d_{min}}_{K},\underbrace{d_{max},...,d_{max}}_{N-K}),
\label{E200}
\end{equation}
or
\begin{equation}
\overline{d}^{N}=(\underbrace{d_{min},...,d_{min}}_{K},d^{*},\underbrace{d_{max},...,d_{max}}_{N-K-1}),
\label{E210}
\end{equation}
with $d^* \in (d_{min},d_{max})$ being the unique positive root of the quadratic equation
\begin{equation}
\varphi_{0}(d^{*})= \gamma_{OAR} - K \varphi_{0}(d_{min})-(N-K-1) \varphi_{0}(d_{max}),
\label{E400}
\end{equation}
with $\varphi_0$ defined in (\ref{var0}).

We can characterize the unknown value $K$ as follows:
\begin{itemize}
\item [a)]  In the case (\ref{E200}), by using the equality restriction we derive that
\begin{equation}
K = \dfrac{N \varphi_{0}(d_{max})-\gamma_{OAR}}{\varphi_{0}(d_{max})-\varphi_{0}(d_{min})}.
\label{K}
\end{equation}
Of course, this holds if and only if the right hand side is a natural number or zero.

\item [b)] In the case (\ref{E210}), since $\varphi_{0}$ is an strictly increasing function in $[0,+\infty)$, we know that
$$ \varphi_{0}(d_{min}) < \varphi_{0}(d^{*}) < \varphi_{0}(d_{max}),$$
and using (\ref{E400}) we get that
$$
K \in \left(\dfrac{N\varphi_{0}(d_{max})-\gamma_{OAR}}{\varphi_{0}(d_{max})-\varphi_{0}(d_{min})}-1, \dfrac{N \varphi_{0}(d_{max})-\gamma_{OAR}}{\varphi_{0}(d_{max})-\varphi_{0}(d_{min})}\right)\cap \mathbb{N},
$$
which means that
\begin{equation}
K = \displaystyle \lfloor \dfrac{N\varphi_{0}(d_{max})-\gamma_{OAR}}{\varphi_{0}(d_{max})-\varphi_{0}(d_{min})} \rfloor.
\label{K2}
\end{equation}
\end{itemize}

Taking into account the conditions (\ref{K}) and (\ref{K2}), it is easy to conclude that the latter structure (\ref{E210}) is more frequently found in practice than  (\ref{E200}).
Previous argumentations lead us to the following result:

\begin{theorem}
Let us assume  $d_{min}>0$, $\rho_{0} \geq 2$ and $N \in \left( \lambda_{0}, \rho_{0} \right] \cap \mathbb{N}.$ Then,
a solution to problem  $(P_1^{N,-})$ is given by
\begin{itemize}
\item [a)]  $\overline{d}^N=(\underbrace{d_{min},...,d_{min}}_{K},\underbrace{d_{max},...,d_{max}}_{N-K}),$
when $K$ defined by (\ref{K}) belongs to $\mathbb{N} \cup \{0\};$ otherwise,
\item [b)] $\overline{d}^N=(\underbrace{d_{min},...,d_{min}}_{K},d^{*},\underbrace{d_{max},...,d_{max}}_{N-K-1}),$
with $K$ defined by (\ref{K2}) and $d^\star$ satisfying (\ref{E400}).
\end{itemize}
\label{teo2-}
\end{theorem}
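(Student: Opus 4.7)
The plan is to combine the ingredients already prepared in the subsection: the Lagrange multiplier necessary conditions derived from (\ref{l1})--(\ref{l3}), the dimension reduction cascade $(P_1^{N,-}) \to (P_1^{N-1,-}) \to \dots \to (P_1^{1,-})$ via the preceding Proposition, the strict monotonicity of $\varphi_0$ on $[0,+\infty)$, and the invariance of both the objective $\sum d_i$ and the feasible set $\mathbb{K}_1^N$ under permutations of coordinates. The theorem is then largely a bookkeeping consequence of these facts.

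First, I would invoke Theorem \ref{exist} to get a solution $\overline{d}^N$ and recall from the discussion preceding the theorem that, by the Lagrange argument, $\overline{d}^N$ cannot lie entirely in the open box $(d_{min},d_{max})^N$; otherwise all $\overline{\mu}_i$ vanish in (\ref{l3}), which by (\ref{l33}) forces $\overline{d}_1=\dots=\overline{d}_N$, identifying $\overline{d}^N$ with the maximizer of $(P_1^{N,+})$ from Theorem \ref{teo2} and contradicting its minimizing role (for $N\geq 2$ the minimum and maximum on $\mathbb{K}_1^N$ are distinct). Thus some coordinate is pinned to $\{d_{min},d_{max}\}$, and by the preceding Proposition the remaining $N-1$ coordinates solve $(P_1^{N-1,-})$ with an updated right-hand side. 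Since that reduced problem has exactly the same structural form (linear objective, single quadratic equality, box constraints), the same Lagrange dichotomy applies, and the reduction iterates.

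Second, I would run this recursion until it terminates, which happens in exactly one of two ways: either all $N$ coordinates land in $\{d_{min},d_{max}\}$, yielding template (\ref{E200}), or the recursion reaches the one-dimensional problem $(P_1^{1,-})$ whose free unknown lies in $(d_{min},d_{max})$ and is determined by the residual quadratic equation, yielding template (\ref{E210}). Permutation invariance lets me reorder coordinates so that the $d_{min}$ entries come first, the possible interior value $d^*$ sits in the middle, and the $d_{max}$ entries come last. Evaluating the active constraint $E_{OAR}(N,\overline{d}^N)=\gamma_{OAR}$ on template (\ref{E200}) gives $K\varphi_0(d_{min})+(N-K)\varphi_0(d_{max})=\gamma_{OAR}$, from which (\ref{K}) drops out algebraically; evaluating it on template (\ref{E210}) gives (\ref{E400}), and combining with the strict bounds $\varphi_0(d_{min})<\varphi_0(d^*)<\varphi_0(d_{max})$ pins $K$ into an open unit interval of length one, forcing the floor expression (\ref{K2}). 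The quadratic (\ref{E400}) then admits a unique positive root, which by construction lies in $(d_{min},d_{max})$.

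The main obstacle I foresee is the recursion step: one must check that each reduced problem $(P_1^{N-k,-})$ genuinely inherits the hypotheses needed to rerun the Lagrange argument (nonemptiness of the feasible set, non-degeneracy of the constraint, and the impossibility that all remaining coordinates be equal and interior). Non-degeneracy is automatic because the quadratic form of $E_{OAR}$ has nonzero gradient on $(0,+\infty)^{N-k}$; feasibility at each stage follows from the fact that $\overline{d}^N$ itself, restricted to the remaining coordinates, is a feasible point; and the equal-interior case is ruled out by the same Cauchy--Schwarz comparison with $(P_1^{N-k,+})$ as in the full-dimensional step. Once this inheritance is verified, the remaining arithmetic for $K$ and $d^*$ is elementary, and the split into cases (a) and (b) is dictated solely by whether the closed-form ratio in (\ref{K}) happens to be a nonnegative integer.
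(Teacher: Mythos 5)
Your proposal is correct and follows essentially the same route as the paper: Lagrange multipliers force at least one coordinate of any minimizer onto $\{d_{min},d_{max}\}$ (the all-interior case collapsing to the equal-dose maximizer of $(P_1^{N,+})$, a contradiction), the dimension-reduction proposition is iterated down to at most one interior coordinate, and the active constraint together with the strict monotonicity of $\varphi_0$ yields (\ref{K}) or (\ref{K2}) and (\ref{E400}). Your explicit check that each reduced problem inherits the hypotheses needed to rerun the Lagrange dichotomy is a welcome point of extra care that the paper leaves implicit, but it does not change the argument.
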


\begin{remark}
It is not difficult to show that a solution for $(P_1^{N,-})$ is also a solution for the problem
\begin{equation}  \left\{
\begin{array}{ll}
\textrm{Minimize  } \displaystyle \sum_{i=1}^{N} d_{i,} \\
\textrm{subject to } d \in \mathbb{R}^{N} \textrm{ such that } \\
E_{OAR}(N,d) \geq \gamma_{OAR}, \\
d_{min} \leq d_{i} \leq d_{max} \textrm{, } 1 \leq i \leq N.
\end{array}
\right.
\label{PEQ-}
\end{equation}
We will use this property in the proof of Theorem \ref{TPROP3} (see Appendix $1$).
\label{R2}
\end{remark}

\subsection{Analytical solution for $(P_{1})$}
\label{SP1}

As we pointed out, a solution of $(P_{1})$ will be the pair $\left( \overline{N},\overline{d}^{\overline{N}} \right) $, where  $\overline{d}^{\overline{N}}$ denotes a solution of
$(P_1^{\overline{N}})$  from the finite set  $$\left\lbrace \left( N,\overline{d}^{N}\right): N \in \left[1, \rho_{0} \right] \cap \mathbb{N} \right\rbrace, $$ maximizing the value of $E_T(N,d)$.
In fact, combining previous results, we can avoid the calculation of most solutions for $(P_{1}^N)$ by studying its dependence with respect to $N$. This is the goal of the next results.
Let us start by studying the less frequent case: when $\lfloor \lambda_0 \rfloor = \lfloor  \rho_0 \rfloor$.
	
\begin{theorem}
Let us assume $d_{min}>0,$ $\rho_{0} \geq 2$ and $\lfloor \lambda_0 \rfloor = \lfloor  \rho_0 \rfloor$.
Then, the unique solution to problem $(P_{1})$ is given by the pair
$(\overline{N}, \overline{d}^{\overline{N}})$ with $\overline{N} = \lfloor \lambda_0 \rfloor$  and $\overline{d}^{\overline{N}} = (d_{max},...,d_{max}).$
\label{TPROP2-}
\end{theorem}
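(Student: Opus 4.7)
The plan is to observe that the hypothesis $\lfloor\lambda_0\rfloor=\lfloor\rho_0\rfloor$ collapses $(P_1)$ onto the regime where Theorem~\ref{exist} already produces the trivial maximizer $(d_{max},\ldots,d_{max})$, so the outer maximization over $N$ reduces to a one-line monotonicity argument. None of the heavier machinery built in Proposition~\ref{PROP1} or in the discussion of $(P_1^{N,-})$ will be needed here.

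First I would show that the assumption forces $(\lambda_0,\rho_0]\cap\mathbb{N}=\emptyset$. Writing $m:=\lfloor\lambda_0\rfloor=\lfloor\rho_0\rfloor$, one has $m\le\lambda_0\le\rho_0<m+1$, so any integer in $(\lambda_0,\rho_0]$ would have to sit inside the empty interval $(m,m+1)$. Hence the set of admissible values of $N$ for $(P_1)$ is
$$[1,\rho_0]\cap\mathbb{N}=\{1,2,\ldots,m\}\subset[1,\lambda_0]\cap\mathbb{N},$$
where $m\ge 2$ thanks to the standing hypothesis $\rho_0\ge 2$.

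Next, for every such $N$ the first case of Theorem~\ref{exist} identifies the solution of $(P_1^N)$ as $\overline{d}^N=(d_{max},\ldots,d_{max})$, with value
$$E_T(N,\overline{d}^N)=N\bigl(\alpha_T d_{max}+\beta_T d_{max}^2\bigr).$$
Because $\alpha_T,\beta_T,d_{max}>0$, this quantity is strictly increasing in $N$, so the outer maximum over $\{1,\ldots,m\}$ is attained at $\overline{N}=m=\lfloor\lambda_0\rfloor$, giving $\overline{d}^{\overline{N}}=(d_{max},\ldots,d_{max})$ as claimed.

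I do not foresee a genuine obstacle; the only point worth flagging is uniqueness, and it falls out of two independent strictness properties. The inner uniqueness follows because $E_T$ is strictly increasing in each coordinate on $[d_{min},d_{max}]^N$, so whenever the vertex $(d_{max},\ldots,d_{max})$ is feasible (which holds exactly when $N\le\lambda_0$) it is the unique maximizer inside the box. The outer uniqueness follows because $N\mapsto N(\alpha_T d_{max}+\beta_T d_{max}^2)$ is strictly monotone, leaving no ties among the finitely many admissible $N$.
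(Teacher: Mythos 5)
Your argument is correct and follows essentially the same route as the paper's proof: the feasible values of $N$ are $\{1,\ldots,\lfloor\rho_0\rfloor\}$, all of which lie in $[1,\lambda_0]$ under the hypothesis $\lfloor\lambda_0\rfloor=\lfloor\rho_0\rfloor$, so each $(P_1^N)$ is solved by the all-$d_{max}$ vector and the outer maximum is attained at the largest admissible $N$. You merely supply more detail than the paper (the emptiness of $(\lambda_0,\rho_0]\cap\mathbb{N}$ and the two strictness claims for uniqueness); the one point needing care --- that the identification $\lambda_0=\gamma_{OAR}/\varphi_0(d_{max})$, and hence the feasibility of the vertex for all $N\le\lambda_0$, requires this ratio to be at least $1$ --- is already covered by your observation that $m\ge 2$.
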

\begin{proof}
In this case the set of feasible values for $N$ is $\{1,\ldots,N_1\} \subset \mathbb{N}$ with $N_1 = \lfloor \rho_0 \rfloor = \lfloor \lambda_0 \rfloor.$
For those values of $N$, the solution for $(P_{1}^N)$ has the form $\overline{d}^{N}=(d_{max},...,d_{max})$. Among them, it is clear that in order to solve
$(P_{1})$ only the one with the largest number of components is of interest; this is attained at $N_1$. \end{proof}

We will continue to analyze the most common case: when $\lfloor \lambda_0 \rfloor < \lfloor  \rho_0 \rfloor$.
In the  the trivial case $\omega_{\delta} = 0$, the function to be minimized and the one defining the restriction are proportional.
Therefore, we can be derive the following result:

\begin{proposition}
Let us assume $d_{min}>0,$ $\rho_{0} \geq 2,$ $\lfloor \lambda_0 \rfloor < \lfloor  \rho_0 \rfloor$ and $\omega_{\delta} = 0.$
Then any feasible pair $(\overline{N},d)$  with $E_{OAR}(\overline{N},d) = \gamma_{OAR}$ is a solution to problem $(P_1)$.
In particular, the pairs $(\overline{N}, \overline{d}^{\overline{N}})$ with $\overline{N} \in \{\lceil \lambda_0 \rceil, \ldots, \lfloor  \rho_0 \rfloor\}$ and $\overline{d}^{\overline{N}} = (\overline{d}_{0},...,\overline{d}_{0}),$
where
\begin{equation}
\overline{d}_{0}=\dfrac{-\alpha_{0} \overline{N} + \sqrt{(\alpha_{0}\overline{N})^{2}+4\beta_{0} \overline{N} \gamma_{OAR}}}{2\beta_{0} \delta \overline{N}},
\label{E500}
\end{equation}
with $\overline{N}$ in the above set.
\label{PPROP2}
\end{proposition}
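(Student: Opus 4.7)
The key observation is that $\omega_{\delta}=0$ means $\alpha_{T}\beta_{0}\delta=\alpha_{0}\beta_{T}$, which makes the objective $E_T$ and the OAR constraint $E_{OAR}$ proportional. So my plan is to derive this proportionality explicitly, use it to turn the maximization of $E_T$ into the simple question of making $E_{OAR}$ as large as the bound $\gamma_{OAR}$ allows, and then verify that the uniform-dose pairs described in the statement are indeed feasible witnesses.

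First, I would compute directly: since $\omega_{\delta}=0$ gives $\alpha_{T}=\beta_{T}\alpha_{0}/(\beta_{0}\delta)$, one finds
\[
E_{T}(N,d)=\frac{\beta_{T}}{\beta_{0}\delta^{2}}\Big(\alpha_{0}\delta\sum_{i=1}^{N}d_{i}+\beta_{0}\delta^{2}\sum_{i=1}^{N}d_{i}^{2}\Big)=\frac{\beta_{T}}{\beta_{0}\delta^{2}}\,E_{OAR}(N,d)
\]
for every $(N,d)$. Hence on the feasible set of $(P_{1})$ the objective satisfies $E_{T}(N,d)\le \beta_{T}\gamma_{OAR}/(\beta_{0}\delta^{2})$, with equality exactly when $E_{OAR}(N,d)=\gamma_{OAR}$. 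This proves the first assertion: any feasible pair $(\overline{N},d)$ saturating the OAR constraint is a global maximizer of $(P_{1})$.

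Second, I would exhibit the uniform-dose candidates. For each integer $\overline{N}\in\{\lceil\lambda_{0}\rceil,\ldots,\lfloor\rho_{0}\rfloor\}$, the value $\overline{d}_{0}$ in (\ref{E500}) is precisely the positive root of $\varphi_{0}(\overline{d}_{0})\,\overline{N}=\gamma_{OAR}$, so the pair $(\overline{N},(\overline{d}_{0},\ldots,\overline{d}_{0}))$ satisfies $E_{OAR}=\gamma_{OAR}$ by construction. The remaining feasibility check is $d_{min}\le\overline{d}_{0}\le d_{max}$, but this is exactly the bound established at the end of the proof of Theorem \ref{teo2} under the hypothesis $\overline{N}\in(\lambda_{0},\rho_{0}]\cap\mathbb{N}$, which coincides (up to the endpoint $\lambda_{0}$, where $\overline{d}_{0}=d_{max}$ as can be verified by direct substitution) with the set in the statement. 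Combining feasibility and $E_{OAR}=\gamma_{OAR}$ with the first part, these pairs are solutions.

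I do not expect a real obstacle here: the argument is just the proportionality $E_{T}=(\beta_{T}/(\beta_{0}\delta^{2}))E_{OAR}$ plus a one-line feasibility check borrowed from Theorem \ref{teo2}. The only minor care point is the boundary case in which $\lambda_{0}$ is itself an integer, where one must verify that $\overline{d}_{0}=d_{max}$ so that the pair is still feasible and attains $E_{OAR}=\gamma_{OAR}$; this reduces to a direct algebraic simplification of (\ref{E500}) using $\lambda_{0}\varphi_{0}(d_{max})=\gamma_{OAR}$.
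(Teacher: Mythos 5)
Your proof is correct and takes essentially the same route as the paper: the identity $E_{T}=\bigl(\beta_{T}/(\beta_{0}\delta^{2})\bigr)E_{OAR}$ under $\omega_{\delta}=0$ reduces $(P_{1})$ to maximizing $E_{OAR}$ subject to $E_{OAR}\leq\gamma_{OAR}$, so the maximum is attained exactly on the active constraint, and the uniform-dose pairs are feasible witnesses because $\overline{d}_{0}\in[d_{min},d_{max}]$ precisely when $\overline{N}\in[\lambda_{0},\rho_{0}]$. Your explicit handling of the endpoint $\overline{N}=\lambda_{0}\in\mathbb{N}$ (where $\overline{d}_{0}=d_{max}$) is a careful touch the paper glosses over, but it is the same argument.
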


\begin{proof}
Due to the hypothesis $\omega_{\delta} = 0$, we deduce straightforwardly that problem $(P_1)$ is equivalent to
$$ (\tilde{P}_1) \left\{ \begin{array}{ll} \begin{aligned}
& \textrm{Maximize } E_{OAR}(N,d),  \\
& \textrm{subject to } N \in \mathbb{N}, d \in \mathbb{R}^{N} \textrm{ such that  } \\
& E_{OAR}(N,d) \leq \gamma_{OAR}, \\
& d_{min}\leq d_{i}\leq d_{max}, \ i=1,...,N,
\end{aligned}
\end{array}
\right.$$
Obviously, the maximum value is reached when the restriction becomes an equality.
This can be achieved in several ways, such as the treatments with equal doses described in the proposition statement.
Let us emphasize that $\overline{d}_{0} \in [d_{min},d_{max}]$ if and only if $\overline{N} \in [\lambda_0,\rho_0].$  \end{proof}

\begin{theorem}
Let us assume $d_{min}>0,$ $\rho_{0} \geq 2,$ $\lfloor \lambda_0 \rfloor < \lfloor  \rho_0 \rfloor$ and $\omega_{\delta} > 0.$
Then, the unique solution to problem $(P_{1})$ is given by the pair
$\left( \overline{N},\overline{d}^{\overline{N}} \right) $ with $\overline{N} = \lfloor \rho_0 \rfloor$ and
$\overline{d}^{\overline{N}}=(\overline{d}_{0},...,\overline{d}_{0})$, with $\overline{d}_{0}$ given by (\ref{E500}).
\label{TPROP2}
\end{theorem}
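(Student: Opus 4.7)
The plan is to leverage Theorem~\ref{teo1}: solving $(P_1)$ reduces to comparing $E_T(N,\overline{d}^N)$ across the finite family $\{(N,\overline{d}^N) : N \in [1,\rho_0]\cap\mathbb{N}\}$, where $\overline{d}^N$ solves $(P_1^N)$. Theorem~\ref{exist} partitions this index set into two regimes: for $N \in [1,\lfloor \lambda_0 \rfloor]$ one has $\overline{d}^N=(d_{max},\ldots,d_{max})$, whereas for $N \in (\lambda_0,\rho_0]\cap\mathbb{N}$ Proposition~\ref{PROP1}(i) combined with Theorem~\ref{teo2} yields a unique optimum $\overline{d}^N=(\overline{d}_0(N),\ldots,\overline{d}_0(N))$ characterized by the active equality constraint $N\varphi_0(\overline{d}_0(N))=\gamma_{OAR}$. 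The assumption $\lfloor \lambda_0 \rfloor < \lfloor \rho_0 \rfloor$ guarantees $\lfloor \rho_0 \rfloor \in (\lambda_0,\rho_0]\cap\mathbb{N}$, so the second regime is nonempty.

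My strategy is to prove that the map $N \mapsto E_T(N,\overline{d}^N)$ is strictly increasing over the entire feasible range, forcing the maximum to occur at the largest admissible integer, namely $\overline{N}=\lfloor \rho_0 \rfloor$. On the first regime this is immediate, since $E_T(N,\overline{d}^N) = N(\alpha_T d_{max}+\beta_T d_{max}^2)$ is linear and increasing in $N$. On the second regime, formula (\ref{eq1}) together with $\sum_i \overline{d}_i = N\overline{d}_0(N)$ gives
\begin{equation*}
E_T(N,\overline{d}^N) = \beta_T\,\omega_\delta\,N\overline{d}_0(N) + \frac{\beta_T\gamma_{OAR}}{\beta_0\delta^2},
\end{equation*}
so, since $\omega_\delta>0$, monotonicity of $E_T$ in $N$ reduces to monotonicity of the product $N\overline{d}_0(N)$.

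To establish the latter, I would differentiate the defining identity $N\varphi_0(\overline{d}_0(N))=\gamma_{OAR}$ implicitly in $N$, obtaining $\varphi_0(\overline{d}_0)+N\varphi_0'(\overline{d}_0)\overline{d}_0'(N)=0$, and hence
\begin{equation*}
\frac{d}{dN}\bigl[N\overline{d}_0(N)\bigr] = \overline{d}_0(N) + N\overline{d}_0'(N) = \frac{\overline{d}_0\varphi_0'(\overline{d}_0)-\varphi_0(\overline{d}_0)}{\varphi_0'(\overline{d}_0)} = \frac{\beta_0\delta^2\overline{d}_0^2}{\varphi_0'(\overline{d}_0)} > 0,
\end{equation*}
the last equality being a one-line computation from the definition (\ref{var0}). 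The two regimes match up continuously at $N=\lambda_0$, since the defining relation there forces $\overline{d}_0(\lambda_0)=d_{max}$, so the global strict monotonicity follows. Uniqueness of the component $\overline{d}^{\overline{N}}$ is then inherited from Theorem~\ref{teo2} applied at $\overline{N}=\lfloor \rho_0 \rfloor$, yielding the stated closed form (\ref{E500}).

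The main obstacle is the apparent tension that $\overline{d}_0(N)$ is itself strictly decreasing in $N$ (larger $N$ spreads the fixed OAR budget $\gamma_{OAR}$ over more fractions, each necessarily smaller), so that the monotonicity of the product $N\overline{d}_0(N)$ hinges on a cancellation. The implicit-differentiation step above makes this cancellation transparent; an alternative would be to differentiate the closed form (\ref{E5}) directly and reduce the positivity of the derivative to the elementary inequality $4\beta_0^2\gamma_{OAR}^2>0$ after squaring.
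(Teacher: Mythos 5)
Your proposal is correct, and its overall skeleton matches the paper's: reduce to the finite family $\{(N,\overline{d}^N)\}$, split into the regime $N\le\lfloor\lambda_0\rfloor$ (all doses $d_{max}$) and the regime $N\in(\lambda_0,\rho_0]\cap\mathbb{N}$ (uniform dose $\overline{d}_0(N)$ via Proposition \ref{PROP1} and Theorem \ref{teo2}), and then show that the optimum sits at the largest feasible $N$. Within the second regime you both rest on the strict monotonicity of $N\mapsto N\overline{d}_0(N)$; the paper simply asserts that $\psi(N)$ is ``easily seen'' to be increasing from the closed form, while your implicit differentiation of $N\varphi_0(\overline{d}_0(N))=\gamma_{OAR}$, giving $\frac{d}{dN}[N\overline{d}_0(N)]=\beta_0\delta^2\overline{d}_0^2/\varphi_0'(\overline{d}_0)>0$, is a clean way to supply that detail. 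Where you genuinely diverge is the cross-regime comparison, i.e.\ showing $E_T(\lfloor\lambda_0\rfloor,(d_{max},\ldots,d_{max}))<E_T(\lfloor\rho_0\rfloor,\overline{d}^{\lfloor\rho_0\rfloor})$: the paper introduces the ad hoc linear function $H(x)=N_2(x\overline{d}_0+\overline{d}_0^2)-N_1(xd_{max}+d_{max}^2)$, checks $H(\alpha_0/(\beta_0\delta))\ge 0$ and $H'>0$, and evaluates at $x=\alpha_T/\beta_T$; you instead extend both regime formulas to real $N$, observe that they agree at the junction $N=\lambda_0$ (where $\overline{d}_0(\lambda_0)=d_{max}$, so the second-regime expression $\beta_T\omega_\delta\,\lambda_0 d_{max}+\beta_T\gamma_{OAR}/(\beta_0\delta^2)$ collapses to $\lambda_0\varphi_T(d_{max})$), and conclude global strict monotonicity of the piecewise function. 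Your gluing argument is arguably more transparent and explains \emph{why} the inequality holds, whereas the paper's $H$-trick is more self-contained and is reused verbatim in Appendices 2 and 3. One small caveat: your matching claim $\overline{d}_0(\lambda_0)=d_{max}$ presumes $\lambda_0=\gamma_{OAR}/\varphi_0(d_{max})\ge 1$; in the degenerate case $\lambda_0=1$ with $\gamma_{OAR}<\varphi_0(d_{max})$ the first regime is vacuous in substance and the second-regime formula already covers $N=1$, so the conclusion is unaffected, but it is worth a sentence (the paper's own treatment of Theorem \ref{exist} glosses over the same point).
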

\begin{proof}
Here, the set of feasible values for $N$ is $\{1,\ldots,N_2\} \subset \mathbb{N}$ with $N_2 = \lfloor \rho_0 \rfloor.$
Arguing as in previous theorem, among the small values (i. e. $N  \in  \{1,\ldots,N_1\},$ with $N_1 = \lfloor \lambda_0 \rfloor,$
for solving $(P_{1})$ we only retain $N=N_1$ and $\overline{d}^{N_1}=(d_{max},...,d_{max})$.
For the other values, i. e. $N  \in  \{N_1+1,\ldots,N_2\},$ since $\omega_{\delta} > 0$, the corresponding solution for $(P_{1}^N)$  is given by $\overline{d}^{N}=(\overline{d}_{0},...,\overline{d}_{0})$ with $\overline{d}_{0}$ defined in (\ref{E5}).
In order to study the dependence with respect to $N$ for these values, thanks to Proposition \ref{PROP1}, it is enough to consider the auxiliary function
$$\psi(N)= N \overline{d}_{0}=\dfrac{-\alpha_{0} N + \sqrt{(\alpha_{0} N)^{2}+4\beta_{0} N \gamma_{OAR}}}{2\beta_{0} \delta}.$$
Here, it follows easily that $\psi(N)$ is an strictly increasing function and then, it will take its maximum value in $\{N_1+1,\ldots,N_2\}$ at $N_2.$

Finally, we will derive that $(N_2,\overline{d}^{N_2})$ is the unique solution to problem $(P_{1})$ by showing that
\begin{equation}
E_T(N_1,\overline{d}^{N_1}) < E_T(N_2,\overline{d}^{N_2}).
\label{E530}
\end{equation}

To that end, let us consider the linear function
\[ H(x) = N_2(x\overline{d}_0+\overline{d}_0^2)-N_1(x d_{max}+d_{max}^2), \ \ \ x \in [\dfrac{\alpha_0}{\beta_0 \delta}, +\infty),\]
with
\begin{equation}
\overline{d}_0= \dfrac{-\alpha_{0} N_2 + \sqrt{(\alpha_{0} N_2)^{2}+4\beta_{0} N_2 \gamma_{OAR}}}{2\beta_{0} \delta N_2}.
\label{E550}
\end{equation}

Using that $N_1 \varphi_0(d_{max}) \leq \gamma_{OAR} = N_2\varphi_0(\overline{d}_0)$ by the admissibility,
we get that $H\left(\dfrac{\alpha_0}{\beta_0 \delta}\right) \geq 0$. Also, taking into account (\ref{E550}),
it can also be checked that $H'(x) = N_2\overline{d}_0-N_1 d_{max} > 0,$ because $N_1 < N_2$.
Then, from the assumption $\omega_{\delta} >0$ (see (\ref{Ew})), it follows
that $H\left(\dfrac{\alpha_T}{\beta_T}\right) > 0$, which is equivalent to (\ref{E530}).
\end{proof}

In the case $\omega_{\delta} < 0,$ the situation is more complicated and it is detailed in the next result. The proof is a little bit technical and is postponed to the Appendix $1$:

\begin{theorem}
Let us assume $d_{min}>0,$ $\rho_{0} \geq 2,$ $\lfloor \lambda_0 \rfloor < \lfloor  \rho_0 \rfloor$ and $\omega_{\delta} < 0$.  Then, a solution to problem $(P_{1})$ is given by one of the following pairs:
\begin{itemize}
\item [i)] $\left( \overline{N},\overline{d}^{\overline{N}} \right) $ with $\overline{N} = \lfloor \lambda_0 \rfloor$ and
$\overline{d}^{\overline{N}}=(d_{max},...,d_{max}),$
\item [ii)] $\left( \overline{N},\overline{d}^{\overline{N}} \right) $ with $\overline{N} = \lceil \lambda_0 \rceil$ and
$\overline{d}^{\overline{N}}=(\underbrace{d_{min},...,d_{min}}_{K},\underbrace{d_{max},...,d_{max}}_{\overline{N}-K}),$
when $K$ defined by (\ref{K}) with $N = \overline{N}$ belongs to $\mathbb{N} \cup \{0\},$ or
\item [iii)] $\left( \overline{N},\overline{d}^{\overline{N}} \right) $ with $\overline{N} = \lceil \lambda_0 \rceil$ and
$\overline{d}^{\overline{N}}=(\underbrace{d_{min},...,d_{min}}_{K},d^{*},\underbrace{d_{max},...,d_{max}}_{\overline{N}-K-1}),$
where $K$ is defined in (\ref{K2}) and $d^\star$ satisfies (\ref{E400}) with $N = \overline{N}.$
\end{itemize}
\label{TPROP3}
\end{theorem}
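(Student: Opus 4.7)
The plan is to pin down the structural candidates for a global optimum of $(P_1)$ by tracking how $E_T(N, \overline{d}^N)$ depends on $N$ over the finite range $[1, \rho_0] \cap \mathbb{N}$ provided by Theorem \ref{teo1}. I would split that range at $\lambda_0$. For $N \in [1, \lambda_0] \cap \mathbb{N}$, Theorem \ref{exist} yields $\overline{d}^N = (d_{max}, \ldots, d_{max})$, so
$$E_T(N, \overline{d}^N) = N(\alpha_T d_{max} + \beta_T d_{max}^2),$$
which is strictly increasing in $N$. Hence on this subrange the best candidate is $N = \lfloor \lambda_0 \rfloor$, which matches case (i).

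For $N \in (\lambda_0, \rho_0] \cap \mathbb{N}$, the assumption $\omega_\delta < 0$ and Proposition \ref{PROP1}(ii) reduce $(P_1^N)$ to $(P_1^{N,-})$, and Theorem \ref{teo2-} gives the structural form of $\overline{d}^N$. Since the OAR constraint is active by Theorem \ref{teoAct}, the identity (\ref{eq1}) simplifies the objective to
$$E_T(N, \overline{d}^N) = \beta_T\, \omega_\delta\, S_N + \frac{\beta_T \gamma_{OAR}}{\beta_0 \delta^2}, \qquad S_N := \sum_{i=1}^{N} \overline{d}^N_i.$$
Because $\omega_\delta < 0$, maximizing $E_T$ over $N$ in this range is equivalent to minimizing $S_N$, so it suffices to show that $S_N$ is strictly increasing, after which the optimum over this subrange is attained at $N = \lceil \lambda_0 \rceil$.

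The core of the argument is thus the monotonicity of $N \mapsto S_N$. My approach is to use Remark \ref{R2} to replace the equality OAR constraint by the inequality $E_{OAR}(\cdot, \cdot) \geq \gamma_{OAR}$, which has the same optimal value. Starting from an optimal $\overline{d}^{N+1}$, I would construct a competitor with $N$ components by merging two chosen entries $d_i, d_j$ into a single dose $d_k$ defined via $\varphi_0(d_k) = \varphi_0(d_i) + \varphi_0(d_j)$. The algebraic identity
$$\varphi_0(x + y) - \varphi_0(x) - \varphi_0(y) = 2\beta_0 \delta^2 \, x y > 0$$
gives $d_k < d_i + d_j$, so the total dose strictly decreases while $E_{OAR}$ is preserved, whence $S_N < S_{N+1}$.

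The main obstacle I anticipate is keeping $d_k$ within $[d_{min}, d_{max}]$. The lower bound $d_k > d_{min}$ follows from $d_i, d_j \geq d_{min}$. For the upper bound, I plan to exploit the concrete structure of $\overline{d}^{N+1}$ from Theorem \ref{teo2-} (at most one interior dose $d^{\ast}$, with the remaining components at $d_{min}$ or $d_{max}$) and select the two smallest components for merging, falling back on the inequality relaxation of Remark \ref{R2} to absorb any excess OAR effect when the merged value would overshoot $d_{max}$, so that feasibility is preserved. Once monotonicity is secured, the global optimum of $(P_1)$ is forced to be attained either at $N = \lfloor \lambda_0 \rfloor$ with all components equal to $d_{max}$ (case (i)) or at $N = \lceil \lambda_0 \rceil$ with the structure from Theorem \ref{teo2-} (cases (ii) and (iii), according to whether the value $K$ from (\ref{K}) is an integer).
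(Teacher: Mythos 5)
Your overall strategy coincides with the paper's: split the feasible range of $N$ at $\lambda_0$, observe that on $[1,\lambda_0]$ the trivial solution $(d_{max},\ldots,d_{max})$ makes $E_T$ increasing so only $N=\lfloor\lambda_0\rfloor$ survives, and on $(\lambda_0,\rho_0]$ use the identity (\ref{eq1}) together with $\omega_{\delta}<0$ to reduce everything to showing that the optimal total dose $S_N=\sum_i\overline{d}^N_i$ is nondecreasing in $N$ (this is exactly inequality (\ref{E600}) in the paper's Appendix $1$). The difference lies entirely in how you propose to prove that monotonicity, and this is where there is a genuine gap.

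Your merging construction replaces two components $d_i,d_j$ of $\overline{d}^{N+1}$ by a single $d_k$ with $\varphi_0(d_k)=\varphi_0(d_i)+\varphi_0(d_j)$, which indeed preserves $E_{OAR}$ and strictly decreases the total dose. The problem is the upper box constraint: even taking the two smallest components, one typically has $\varphi_0(d_i)+\varphi_0(d_j)>\varphi_0(d_{max})$ (e.g.\ two copies of $d_{min}$ already violate this whenever $2\varphi_0(d_{min})>\varphi_0(d_{max})$, which happens for perfectly reasonable data such as $d_{min}=2$, $d_{max}=2.5$, $\alpha_0=\beta_0=\delta=1$; and if only one component sits at $d_{min}$, the second smallest is $\tilde d^{*}$, which can be arbitrarily close to $d_{max}$). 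In that situation $d_k>d_{max}$, and your proposed fallback does not repair it: capping $d_k$ at $d_{max}$ produces a \emph{deficit} of OAR effect, i.e.\ $E_{OAR}<\gamma_{OAR}$, whereas the relaxation of Remark \ref{R2} only tolerates an \emph{excess} ($E_{OAR}\geq\gamma_{OAR}$); so the capped competitor is infeasible for (\ref{PEQ-}) and the comparison with $S_N$ breaks down. Redistributing the missing OAR effect onto the other components is not free either, since they are already at $d_{max}$ except for at most one interior value, and any increase of that value also increases the total dose, which is precisely the quantity you need to keep below $S_{N+1}$. This overshoot regime is exactly the hard part: the paper spends its entire three-case analysis on it, using a dose-preserving ``spreading'' of $K_0$ copies of $d_{min}$ into $K_0-1$ larger doses in Case~1, and a Mean Value Theorem estimate on $d^{*}-\tilde d^{*}$ combined with the monotonicity of the rational function $F(m)$ in Case~3. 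Until you supply a concrete construction (or estimate) covering the case $\varphi_0(d_i)+\varphi_0(d_j)>\varphi_0(d_{max})$, the key lemma, and hence the theorem, is not proved.
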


\begin{remark}
\begin{itemize}
\item [a)] Let us strongly highlight that all expressions included in Theorems \ref{TPROP2-}-\ref{TPROP3} and Proposition \ref{PPROP2} can be explicitly calculated from the initial data of the problem.
Moreover, the calculations to implement these solutions are elementary and can be carried out using a pocket calculator.
\item [b)] On the other hand, when $\omega_{\delta} >0$, the optimal value of $N$ is the largest one within its range of possibilities (i.e. it is a hyperfractionated type treatment) with equal doses,
while in the case $\omega_{\delta} < 0$ the optimal value is the smallest one (i.e. it is a hypofractionated type treatment). In this last case, let us stress that not all doses
have to be equal or large; in fact, some of them may be minimum. As far as we know, this structure is not usually cited in the specialized literature.
\item [c)] One interesting case appears when $\dfrac{\alpha_T}{\beta_T} < \dfrac{\alpha_0}{\beta_0},$ because then $\omega_{\delta} < 0$ for all $\delta \in (0,1]$  and the optimal regimen is always of hypofractionated type, independent of the technology used and the geometry of the tumor. In practice this condition holds in some special cases, such as the prostate tumor, where $\dfrac{\alpha_T}{\beta_T} \approx 1.5 \ Gy,$ while $\dfrac{\alpha_0}{\beta_0} = 2 \ Gy$, see \cite{Mizuta2012} and \cite{ref4}.
\item [d)] After Remark \ref{R1a}-b) (see also the Subsection \ref{EQTR} below), it is clear that the hypofractionated case (associated with $\omega_{\delta} < 0$) is very convenient in the practice.
Assuming that the other parameters are set, the condition $\omega_{\delta} < 0$ can always be achieved by taking $\delta$ close enough to $0$. This last fact is related to increasing the precision of the radiotherapy process (for instance, by using cutting-edge technology).
\item [e)] A related problem to $(P_1)$ is studied in \cite{Bortfeld2015} and \cite{Bruni2019}, where the number of dose fractions $N$ is also an unknown, jointly with $d$.
The framework for that problems is more general, because a repopulation term is included in the objective function, but only the lower bound $d_{min} = 0$ is assumed.
Furthermore, the determination of the optimal value for $N$ is carried out in \cite{Bruni2019} by means of numerical simulations, while in \cite[Theorem 2]{Bortfeld2015} it is done explicitly
and the value $\overline{N} = 1$ is obtained when $\omega_{\delta} < 0$. In this last case, the single dose could be too large in practice (remember that no upper bound is imposed in \cite{Bortfeld2015}) and then more fractions would have to be tried until an acceptable one is found.
\end{itemize}
\label{R3}
\end{remark}
Next, we illustrate the general process with a particular example:
\begin{example}
Let us consider the following parameters taken from a typical clinical situation: $\alpha_{T} = 0.05 \ Gy^{-1}, \beta_{T}=0.005 \ Gy^{-2}, \alpha_{0} = 0.04 \ Gy^{-1}, \beta_{0}=0.02 \ Gy^{-2},$ see \cite{Mizuta2012}, together with $d_{min}=1 \ Gy,$ and $d_{max}=6 \ Gy.$ Then, the problem reads
$$ (P_{11}) \left\{
\begin{array}{ll}
\textrm{Maximize  }  E_{T}(N,d) = 0.05 \displaystyle \sum_{i=1}^{N} d_{i} + 0.005 \sum_{i=1}^{N} d_{i}^{2}, \\
\textrm{subject to } N \in \mathbb{N}, d_{i} \in \mathbb{R}, \\
 \displaystyle E_{OAR}(N,d) = 0.04 \delta \sum_{i=1}^{N} d_{i} + 0.02 \delta^{2}\sum_{i=1}^{N} d_{i}^{2} \leq \gamma_{OAR}, \\
1 \leq d_{i} \leq 6, i=1,...,N.
\end{array} \right.  $$

\begin{itemize}
\item [i)] For $\delta=0.3$ and $\gamma_{OAR}=0.78$, we have $\lambda_0 \approx 5.7$, $\rho_{0} \approx 56.52$ and $\omega_{\delta} \approx  3.33 > 0$. Among the  values, $N \in \{6,7,8,\dots,54,55,56\}$, we have proved (see Theorem \ref{TPROP2}) that the biggest one, $N=56$, and the solution of $(P_{11}^{56})$ (that here is the hyperfractionated $\overline{d}^{56}=(1.008,...,1.008)$) provides the solution of $(P_{11})$; in fact, $E_{T}(56,\overline{d}^{56}) \approx 3.107$. We can easily check that with the standard protocol $\tilde{d}_S^{25}=(2,...,2)$ we get $E_{T}(25,\tilde{d}_S^{25})=3$, and therefore there is about $3.5\%$ gain in terms of effect on the tumor, while the efficiency regarding OAR is the same ($E_{OAR}(56,\overline{d}^{56})=E_{OAR}(25,\tilde{d}_S^{25})=0.78$).

 On the other hand, the hypofractionated radiotherapy given by $\tilde{d}_2^{15} = (2.67,...,2.67)$ produces $E_{T}(15,\tilde{d}_2^{15}) \approx 2.54,$ although the damage on OAR is also lower: $E_{OAR}(15,\tilde{d}_2^{15}) \approx 0.67$. These last treatments are mentioned in \cite[pg. 16]{RDF2019} in connection with breast cancer.

    Of course, here we are only taking into account the mathematical point of view. In clinical practice, other factors such as patient inconvenience and additional cost may advise the use of fewer doses, if the difference in terms of efficiency is considered small.

\item [ii)] For $\delta=0.1$ and $\gamma_{OAR}=0.22$, we calculate $\lambda_0 \approx 7.05$, $\rho_{0} \approx 52.38$ and $\omega_{\delta}=-10 < 0$. In this case, the solution for $(P_{11})$ is given by $(\overline{N},\overline{d})$ with $\overline{N}=8, \overline{d}=(1,d^{*},\underbrace{6,...,6}_{6})$ and $d^{*} \approx 5.588 \ Gy$ (see Theorem \ref{TPROP3}-iii)), having $E_{T}(8,\overline{d}) \approx 3.37$. Recall that this is a hypofractionated type treatment. Just for comparison reasons, let us mention that the solution of $(P_{11}^{7})$ is $\tilde{d}_1^7 = (6,...,6)$  and the solution of $(P_{11}^{9})$ is $\tilde{d}_2 = (1,1, d_2^{*},\underbrace{6,...,6}_{6})$ with $d_2^{*} \approx 4.9 \ Gy$ producing $E_{T}(7,\tilde{d}_1^7) \approx 3.36$ and $E_{T}(9,\tilde{d}_2) \approx 3.355$, that are smaller than $E_{T}(8,\overline{d}) $ as expected.

\item [iii)] Let us emphasize that the difference between ``few" and ``many" doses is relative to each particular problem and not an absolute classification. For instance, in the problem $(P_{11})$ with $\delta=0.3$ and $\gamma_{OAR}=0.1$, the solution is given by $(\overline{N},\overline{d})$ with $\overline{N}=7, \overline{d}^7=(1.031,\ldots,1.031)$ which corresponds to the hyperfractionated case (because $N \in \{1,\ldots,7\}$), although the number of delivered doses is lower than in the previous hypofractionated treatment, see $ii)$.
\end{itemize}
\label{ejemplo1}
\end{example}

For $\omega_{\delta} < 0$, in most practical situations the solution is the one presented in Theorem \ref{TPROP3}-$iii)$, but the alternatives $i)$ and $ii)$ can also appear
as we show in the following example:

\begin{example}
Let us take the following parameters: $\alpha_{T} = 0.08 \ Gy^{-1},$ $\beta_{T}=0.02 \ Gy^{-2},$ $\alpha_{0}=0.01 \ Gy^{-1}, \beta_0 = 0.001 \ Gy^{-2},  \delta = 1,$ $d_{min} = 1 \ Gy,$ and $d_{max}=6 \ Gy.$ Here, $\omega_{\delta} = -6 < 0$ and the problem under consideration is
$$ (P_{12}) \left\{
\begin{array}{ll}
\textrm{Maximize  } E_{T}(N,d) = 0.08\displaystyle \sum_{i=1}^{N} d_{i} + 0.02\sum_{i=1}^{N} d_{i}^{2}, \\
\textrm{subject to } N \in \mathbb{N}, d_{i} \in \mathbb{R}, \\
0.01\displaystyle \sum_{i=1}^{N} d_{i} + 0.001 \sum_{i=1}^{N} d_{i}^{2} \leq \gamma_{OAR}, \\
1 \leq d_{i} \leq 6, i=1,...,N.
\end{array}
\right.  $$
\end{example}

For $\gamma_{OAR}=0.961,$ we get that $\lambda_0 \approx 10.01$ and $\rho_0 \approx 87.36$. For $N = \lfloor \lambda_0 \rfloor = 10$, the solution of $(P_{12}^{10})$ is given by $\overline{d}^{10}=(6,...,6)$.
While for $N = \lceil \lambda_0 \rceil = 11$, the solution of $(P_{12}^{11})$ is given by $\overline{d}^{11}=(1,d^*,\underbrace{6,...,6}_{9})$, with $d^* \approx 5.53565 \ Gy$.
Since $E_T(10,\overline{d}^{10}) =  12 >  11.956 \approx  E_T(11,\overline{d}^{11})$, the solution of $(P_{12})$ is given by $\overline{N} =10$ and $\overline{d}^{10}$.
This shows that sometimes the option $i)$ is the valid one.

Finally, taking $\gamma_{OAR}=0.971,$ we have $\lambda_0 \approx 10.11,$ $N = \lceil \lambda_0 \rceil = 11$ and
$K = \dfrac{N \varphi_{0}(d_{max})-\gamma_{OAR}}{\varphi_{0}(d_{max})-\varphi_{0}(d_{min})} = 1.$
Then, the solution of $(P_{12}^{11})$ is given by $\overline{d}^{11}=(1,\underbrace{6,...,6}_{10}).$
Now, the solution of $(P_{12})$ is also given by $\overline{N} = 11$ and $\overline{d}^{11},$ because $E_T(11,\overline{d}^{11}) =  12.1$.
So, in this case Theorem \ref{TPROP3}-$ii)$ holds.

Once we have obtained the analytical expressions for the solution of problem $(P_1)$, we can deduce very easily its dependence with respect to the parameters defining the problem.
This will help us to know how to adjust these parameters in order to achieve a desired solution. Let us show a result in the direction:

\begin{corollary}
Let us assume $d_{min}>0,$ $\rho_{0} \geq 2$ and $(\overline{N}, \overline{d}^{\overline{N}})$ is a solution of $(P_1)$. Then,
$\overline{N}$ is an increasing function of $\gamma_{OAR}$, decreasing with respect to $\alpha_0, \beta_0$ and $\delta$ and independent of $\alpha_T$ and $\beta_T$. Moreover,
\begin{itemize}
\item [a)] When $\omega_{\delta} > 0,$ $\overline{N}$ is also decreasing with respect to $d_{min}$ and independent of $d_{max}$.
\item [b)] When $\omega_{\delta} < 0,$ $\overline{N}$ is also decreasing with respect to $d_{max}$ and independent of $d_{min}$.
\end{itemize}
\label{CO16}
\end{corollary}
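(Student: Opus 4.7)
The plan is to read off $\overline{N}$ from the explicit solutions of Theorems \ref{TPROP2-}, \ref{TPROP2}, \ref{TPROP3} and Proposition \ref{PPROP2}, and then to track how the two auxiliary quantities $\lambda_{0}$ and $\rho_{0}$ depend on each parameter. In every case $\overline{N}$ equals either $\lfloor\rho_{0}\rfloor$ (when $\omega_{\delta}>0$) or belongs to $\{\lfloor\lambda_{0}\rfloor,\lceil\lambda_{0}\rceil\}$ (when $\omega_{\delta}<0$, and also in the degenerate case $\lfloor\lambda_{0}\rfloor=\lfloor\rho_{0}\rfloor$, where both bounds coincide with $\overline{N}$). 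The monotonicity is to be understood in the non-strict sense, as $\overline{N}$ is integer-valued and produced by floor/ceiling operations.

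The workhorse of the proof is that $\varphi_{0}(r)=\alpha_{0}\delta r+\beta_{0}\delta^{2}r^{2}$ is strictly increasing in each of its four positive arguments $r,\alpha_{0},\beta_{0},\delta$, while both $\rho_{0}=\gamma_{OAR}/\varphi_{0}(d_{min})$ and $\lambda_{0}=\max\{1,\gamma_{OAR}/\varphi_{0}(d_{max})\}$ are linear and increasing in $\gamma_{OAR}$. Since neither $\lambda_{0}$ nor $\rho_{0}$ involves $\alpha_{T}$ or $\beta_{T}$, and since the hypothesis keeps the sign of $\omega_{\delta}$ (hence the regime) fixed, the value of $\overline{N}$ is insensitive to $\alpha_{T},\beta_{T}$. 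Increasing $\gamma_{OAR}$ enlarges both $\rho_{0}$ and $\lambda_{0}$, so $\overline{N}$ is non-decreasing in $\gamma_{OAR}$; conversely, increasing any of $\alpha_{0},\beta_{0},\delta$ enlarges $\varphi_{0}(d_{min})$ and $\varphi_{0}(d_{max})$, hence shrinks $\rho_{0}$ and $\lambda_{0}$, making $\overline{N}$ non-increasing.

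Parts (a) and (b) then follow by identifying which of $d_{min},d_{max}$ appears in the relevant formula. When $\omega_{\delta}>0$, $\overline{N}=\lfloor\rho_{0}\rfloor$ involves only $\varphi_{0}(d_{min})$, giving the decreasing dependence on $d_{min}$ and the independence of $d_{max}$. Dually, when $\omega_{\delta}<0$, the two candidates $\lfloor\lambda_{0}\rfloor$ and $\lceil\lambda_{0}\rceil$ depend only on $\varphi_{0}(d_{max})$, so $\overline{N}$ decreases with $d_{max}$ and ignores $d_{min}$.

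The one delicate point lies in case (b): Theorem \ref{TPROP3} presents two competing solutions, and the winner is chosen by comparing two values of $E_{T}$ that \emph{do} involve $\alpha_{T},\beta_{T}$, so in principle the choice could shift with these parameters. This is not an obstacle for the claimed monotonicity, because both candidates live on the same monotone ladder $\{\lfloor\lambda_{0}\rfloor,\lceil\lambda_{0}\rceil\}$, and each of them is itself non-decreasing in $\lambda_{0}$; hence any tie-breaking rule delivers a value that inherits the monotonicities of $\lambda_{0}$ in the remaining parameters. Combining this observation with the monotonicity analysis above completes the argument.
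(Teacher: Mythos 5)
Your proposal is correct and follows essentially the same route as the paper: the paper's own proof is a one-line observation that $\overline{N}=\lfloor\rho_0\rfloor$ (given by (\ref{rho0})) when $\omega_{\delta}>0$ and $\overline{N}=\lfloor\lambda_0\rfloor$ or $\lceil\lambda_0\rceil$ (given by (\ref{lam0})) when $\omega_{\delta}<0$, with the monotonicities read off from $\varphi_0$. You are in fact more careful than the paper on the one genuine subtlety --- that the tie-break between $\lfloor\lambda_0\rfloor$ and $\lceil\lambda_0\rceil$ in Theorem \ref{TPROP3} is decided by an $E_T$ comparison involving $\alpha_T,\beta_T$ --- and your observation that both candidates sit on the same monotone ladder is exactly what is needed to salvage the monotonicity claims there.
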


\begin{proof}
It is just a consequence of the expressions $\overline{N} = \lfloor \rho_0 \rfloor$ with $\rho_{0}$ given by (\ref{rho0}), when $\omega_{\delta} > 0,$
and $\overline{N} = \lfloor \lambda_0 \rfloor$ or $\lceil \lambda_0 \rceil$ with $\lambda_0$ given by (\ref{lam0}), when $\omega_{\delta} < 0.$
\end{proof}

In Table \ref{solP1}, we summarize the resolution of problem $(P_1)$ in algorithm form, for the reader's convenience.

\begin{table}
\begin{center}
\begin{tabular}{| c | }
\hline
ALGORITHM FOR SOLVING $(P_1)$ \\
\hline
\hline
DATA: $\alpha_{T}, \beta_{T}, \alpha_{0}, \beta_{0}, d_{min},
d_{max}, \delta$ and $\gamma_{OAR}$ \\
(all positive, $d_{min} < d_{max}$ and $\delta \leq 1$) \\
\hline
CALCULATE:  $\omega_{\delta} = \dfrac{\alpha_T}{\beta_T} - \dfrac{\alpha_0}{\beta_0 \delta},$
$\lambda_{0} = max\left\lbrace  1, \dfrac{\gamma_{OAR}}{\varphi_{0}(d_{max})} \right\rbrace$ and \\
$\rho_{0} = \dfrac{\gamma_{OAR}}{\varphi_{0}(d_{min})},$
with $\varphi_{0}(r)=\alpha_{0} \delta r + \beta_{0} \delta^{2} r^{2}.$ \\
\hline
IF $\rho_0 < 1,$ $(P_1)$ has NO SOLUTION. \\
\hline
IF $\rho_0 = 1$, the pair $(\overline{N}, \overline{d}^{\overline{N}})  = (1,d_{min})$ is the UNIQUE SOLUTION of $(P_{1})$.\\
\hline
IF $\rho_0 \in (1,2)$, the UNIQUE SOLUTION of $(P_1)$ is the pair \\ $(\overline{N}, \overline{d}^{\overline{N}})  = (1,\min{\{d_{max},\overline{d}_{0}\}}),$
with $\overline{d}_{0}=\dfrac{-\alpha_{0} + \sqrt{\alpha^2_{0}+4\beta_{0} \gamma_{OAR}}}{2\beta_{0} \delta}$.\\
\hline
IF $\rho_0 \geq 2$ and $\lfloor \lambda_0 \rfloor = \lfloor  \rho_0 \rfloor$, the UNIQUE SOLUTION of $(P_1)$ \\ is the pair
$(\overline{N}, \overline{d}^{\overline{N}})$ with $\overline{N} = \lfloor \lambda_0 \rfloor$  and $\overline{d}^{\overline{N}} = (d_{max},...,d_{max}).$ \\
\hline
IF $\rho_0 \geq 2,$ $\lfloor \lambda_0 \rfloor < \lfloor  \rho_0 \rfloor$ and $\omega_{\delta} > 0,$ the UNIQUE SOLUTION of $(P_1)$ \\ is the pair
$(\overline{N}, \overline{d}^{\overline{N}})$  with  $\overline{N} =\lfloor \rho_0 \rfloor$ and $\overline{d}^{\overline{N}} = (\overline{d}_{0},...,\overline{d}_{0}),$
\\  where $\overline{d}_{0}=\dfrac{-\alpha_{0} \overline{N} + \sqrt{(\alpha_{0}\overline{N})^{2}+4\beta_{0} \overline{N} \gamma_{OAR}}}{2\beta_{0} \delta \overline{N}}$.
\\ \hline
IF $\rho_0 \geq 2,$ $\lfloor \lambda_0 \rfloor < \lfloor  \rho_0 \rfloor$ and $\omega_{\delta} < 0,$ take $\overline{N}_1 = \lceil \lambda_0 \rceil$ and \\
CALCULATE: $M = \dfrac{\overline{N}_1 \varphi_{0}(d_{max})-\gamma_{OAR}}{\varphi_{0}(d_{max})-\varphi_{0}(d_{min})}$. \\
IF $M  \in \mathbb{N} \cup \{0\}$, take $K = M$ and $\overline{d}_1^{\overline{N}_1} = (\underbrace{d_{min},...,d_{min}}_{K},\underbrace{d_{max},...,d_{max}}_{\overline{N}_1-K})$. \\
IF $M  \not\in \mathbb{N} \cup \{0\}$, take $K = \lfloor M \rfloor$ and $\overline{d}_1^{\overline{N}_1} = (\underbrace{d_{min},...,d_{min}}_{K},d^{*},\underbrace{d_{max},...,d_{max}}_{\overline{N}_1-K-1}),$ \\ with $d^{*} > 0$ and $\varphi_{0}(d^{*})= \gamma_{OAR} - K \varphi_{0}(d_{min})-(\overline{N}_1-K-1) \varphi_{0}(d_{max})$. \\
Also take $\overline{N}_2 = \lfloor \lambda_0 \rfloor$  and $\overline{d}_2^{\overline{N}_2} = (d_{max},...,d_{max}).$ \\
CALCULATE: $E_T( \overline{N}_1,\overline{d}_1^{\overline{N}_1})$  and $E_T( \overline{N}_2,\overline{d}_2^{\overline{N}_2}).$ \\
A SOLUTION of $(P_1)$ is the pair $(\overline{N}, \overline{d}^{\overline{N}})$ that maximizes $E_T$  between them.\\
\hline
IF $\rho_0 \geq 2,$ $\lfloor \lambda_0 \rfloor < \lfloor  \rho_0 \rfloor$ and $\omega_{\delta} = 0,$ ANY FEASIBLE PAIR $(\overline{N},d)$  such that \\
$E_{OAR}(\overline{N},d) = \gamma_{OAR}$ is a SOLUTION for $(P_1)$. \\
In particular, the pairs $(\overline{N}, \overline{d}^{\overline{N}})$ with $\overline{N} \in \{\lceil \lambda_0 \rceil, \ldots, \lfloor  \rho_0 \rfloor\}$ and $\overline{d}^{\overline{N}} = (\overline{d}_{0},...,\overline{d}_{0}),$
\\  where $\overline{d}_{0}=\dfrac{-\alpha_{0} \overline{N} + \sqrt{(\alpha_{0}\overline{N})^{2}+4\beta_{0} \overline{N} \gamma_{OAR}}}{2\beta_{0} \delta \overline{N}}.$\\
\hline
\end{tabular}
\caption{Complete solution for problem $(P_1)$ in algorithmic form.}\label{solP1}
\end{center}
\end{table}

\section{Minimizing the effect of radiation on the organs at risk}
In this section, we will consider a problem closely related to that of the previous section: the goal of this second issue will be to determine the best strategy to minimize the effect of radiation on the organs at risk, while maintaining a minimum effect of radiation on the tumor. It is clear that this approach can be interesting (at least) for palliative therapies. Mathematically we formulate it in the following way:
$$ (P_2)\left\{ \begin{array}{ll}
\begin{aligned} & \textrm{Minimize } E_{OAR}(N,d),  \\
& \textrm{subject to } N \in \mathbb{N}, \\
& d \in \mathbb{R}^{N} \textrm{ such that  } \\
& E_{T}(N,d) \geq \gamma_{T}, \\
& d_{min}\leq d_{i}\leq d_{max} \textrm{,  } i=1,...,N,
\end{aligned}
\end{array}
\right. $$
where $E_{OAR}(N,d)$ is given by (\ref{EOAR}), $E_{T}(N,d)$ is defined in (\ref{ET}) and $\gamma_{T}$ is a given positive parameter.
Of course, this is also a mixed optimization problem with $N+1$ unknowns: the number of radiation doses, $N \in \mathbb{N}$, and the value of the $N$ doses, $d_{i} \in \mathbb{R}, 1 \leq i \leq N$.

This problem has recently been studied in the outstanding work \cite{Mizuta2012}, but with fixed $N$ and only imposing the non-negativity constraint for the doses. Moreover, in \cite{Mizuta2012} it is also remarked that ``The real interest of the present approach would be the determination of the optimum solution for $N$ in clinical practice". As an intermediate step, we have achieved here the expression of the optimal value for $N$ in terms of the parameters of the problem in this particular setting, see Table \ref{solP2} for a detailed description, depending on the case.

As we will see, the study for problem  $(P_2)$ can be carried out following the same argumentation to that of $(P_1)$ with minor differences.

\subsection{Existence of solution for $(P_2)$}

In the sequel we will denote
\begin{equation}
\varphi_{T}(r)=\alpha_T r + \beta_T r^{2},
\label{varT}
\end{equation}
\begin{equation}
\lambda_{T} = max\left\lbrace  1, \dfrac{\gamma_T}{\varphi_{T}(d_{max})}\right\rbrace,
\label{lamT}
\end{equation}
and
\begin{equation}
\rho_{T} = max\left\lbrace  1, \dfrac{\gamma_T}{\varphi_{T}(d_{min})} \right\rbrace.
\label{rhoT}
\end{equation}

Our first observation concerns the existence of solution for $(P_2)$:

\begin{theorem}	
Let us assume $d_{min}>0$. Then, the problem $(P_{2})$ has (at least) one solution.
\end{theorem}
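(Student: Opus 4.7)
The plan is to mirror the proof of Theorem \ref{teo1} by reducing $(P_2)$ to finitely many continuous subproblems indexed by a fixed number of doses $N$. First, I would establish that the feasible set of $(P_2)$ is nonempty: taking $N_0 = \lceil \lambda_T \rceil$ and the uniform dose $d = (d_{max},\ldots,d_{max})$, the definition of $\lambda_T$ in (\ref{lamT}) ensures $E_T(N_0,d) = N_0 \varphi_T(d_{max}) \geq \gamma_T$, so this pair is admissible and produces the finite value $M_0 := N_0 \varphi_0(d_{max})$.

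Next, I would show that only finitely many values of $N$ are relevant. Since $d_{min}>0$, any admissible $d \in [d_{min},d_{max}]^N$ satisfies
\[ E_{OAR}(N,d) \geq N \varphi_0(d_{min}); \]
consequently, any $N$ that can improve upon $M_0$ must satisfy $N \leq M_0/\varphi_0(d_{min})$, while admissibility at all forces $N \geq \gamma_T/\varphi_T(d_{max})$. Thus the set of candidate values of $N$ is a bounded, hence finite, subset of $\mathbb{N}$. For each such fixed $N$, the subproblem consists in minimizing the continuous function $E_{OAR}(N,\cdot)$ over the closed (and therefore compact) subset $\{d \in [d_{min},d_{max}]^N : E_T(N,d) \geq \gamma_T\}$ of $[d_{min},d_{max}]^N$; this subset is nonempty whenever $N \geq \lambda_T$, since then the constant vector $(d_{max},\ldots,d_{max})$ belongs to it. The classical Weierstrass theorem yields a minimizer $\overline{d}^N$, and selecting from the resulting finite collection $\{(N,\overline{d}^N)\}$ the pair that achieves the smallest value of $E_{OAR}$ produces a solution to $(P_2)$.

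The main obstacle, entirely parallel to Remark \ref{R-1}-d) for $(P_1)$, lies in ensuring that $N$ ranges over a finite set: this is precisely where the hypothesis $d_{min}>0$ is indispensable, since otherwise the lower estimate $N\varphi_0(d_{min})$ degenerates to zero and admissible points could spread over arbitrarily many fractions without penalizing $E_{OAR}$, so that the infimum need not be attained. Everything else reduces to a routine application of Weierstrass on compact feasible sets.
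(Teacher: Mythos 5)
Your proof is correct, and its overall skeleton coincides with the paper's: reduce $(P_2)$ to finitely many fixed-$N$ subproblems, solve each by Weierstrass on a compact feasible set, and pick the best of the finitely many minimizers. The one point where you genuinely diverge is the mechanism for discarding all but finitely many values of $N$. The paper splits the feasible range of $N$ at $\rho_T$: for $N \geq \rho_T$ the constant vector $(d_{min},\ldots,d_{min})$ is feasible and is trivially the minimizer of $E_{OAR}(N,\cdot)$, with value $N\varphi_0(d_{min})$ increasing in $N$, so only $N = \lceil \rho_T \rceil$ survives from that side; Weierstrass is then needed only on $[\lambda_T,\rho_T)\cap\mathbb{N}$. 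You instead use a generic level-set argument: the explicit feasible pair $\bigl(\lceil\lambda_T\rceil,(d_{max},\ldots,d_{max})\bigr)$ gives the value $M_0$, and the coercivity bound $E_{OAR}(N,d)\geq N\varphi_0(d_{min})$ eliminates every $N > M_0/\varphi_0(d_{min})$. Both are valid; yours is more robust (it would apply verbatim to any objective bounded below by a quantity growing linearly in $N$, and does not require identifying the minimizer for large $N$), while the paper's version yields the sharper cutoff $\lceil\rho_T\rceil$ and already exhibits the candidate $(d_{min},\ldots,d_{min})$ that is reused in the subsequent derivation of the analytical solution. A microscopic point worth making explicit in your write-up: the reference pair with $N_0=\lceil\lambda_T\rceil$ itself lies in your finite candidate set, since $N_0\varphi_0(d_{min}) \leq N_0\varphi_0(d_{max}) = M_0$; this guarantees that the minimum over the finite collection really is a global minimum. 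Your closing observation on the indispensability of $d_{min}>0$ matches the paper's counterexample $(P_{20})$ for the case $\omega_{\delta}>0$.
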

\label{teo20}
\begin{proof}
It is analogous to that of Theorem \ref{teo1}, although here there are infinite feasible values for $N$:
combining the restrictions, those such that $N \in [\lambda_{T},+\infty) \cap \mathbb{N}$.
We will begin by showing that for each fixed feasible value $N$, the associated problem $(P_{2}^{N})$ has a solution, where
$$(P_{2}^{N}) \left\{
\begin{array}{ll}
\begin{aligned}
& \textrm{Minimize} \hspace{0.4cm} \tilde{E}^N_{OAR}(d) = \alpha_{0} \delta \sum_{i=1}^{N} d_{i} +\beta_{0} \delta^{2} \sum_{i=1}^{N}d_{i}^{2},  \\
& \textrm{subject to } d \in \mathbb{R}^{N} \textrm{ such that }\\
& E_{T}(N,d) \geq \gamma_{T}, \\
& d_{min}\leq d_{i}\leq d_{max} \textrm{,  } i=1,...,N.
\end{aligned}
\end{array}
\right. $$
For large values of $N$, specifically for $N \geq \rho_{T}$, the solution of $(P_{2}^{N})$ is the trivial one with minimum values $d_{min}$. Among them only the smallest value of $N$ could have practical interest, i.e., $\lceil \rho_{T} \rceil$. For the other values, when they exist, that is for $N \in \left[ \lambda_{T}, \rho_{T} \right) \cap \mathbb{N}$, the existence of solution for $(P_{2}^{N})$ is a consequence of Weierstrass Theorem, once more because the objective function is continuous and the feasible set is compact. Therefore, for each value of $N$ in that interval, let us consider a global solution for the problem $(P_{2}^{N})$ that we will denote $\overline{d}^{N}$. Again, it is enough to take the pair $\left( \overline{N},\overline{d}^{\overline{N}} \right) $ from the finite set $$\left\lbrace \left( N,\overline{d}^{N}\right): N \in \left[ \lambda_{T}, \lceil \rho_{T} \rceil \right] \cap \mathbb{N} \right\rbrace, $$ that minimizes the value of $E_{OAR}(N,d)$  as a solution to the problem $(P_{2})$.
\end{proof}

\begin{remark}
\begin{itemize}
\item [a)] Once more, except if all the coordinates of $\overline{d}^{\overline{N}}$ are equal, the solution will not be unique, because two different coordinates can be permuted to generate a different solution.
\item [b)] When $\omega_{\delta} > 0$ (see (\ref{Ew})), the hypothesis $d_{min} > 0$ is necessary for proving the existence of solution for $(P_2)$, as we can see through the following example:
\begin{example}
$$ (P_{20}) \left\{
\begin{array}{ll}
\textrm{Minimize  }  \displaystyle E_{OAR}(N,d)= \sum_{i=1}^{N} d_{i} + \sum_{i=1}^{N} d_{i}^{2}, \\
\textrm{subject to } N \in \mathbb{N}, d_{i} \in \mathbb{R}, \\
\displaystyle E_{T}(N,d) = 2\sum_{i=1}^{N} d_{i} + \sum_{i=1}^{N} d_{i}^{2} \geq 10, \\
0 \leq d_{i} \leq 1, i=1,...,N.
\end{array} \right.  $$
 \end{example}
Let us consider the sequence given by
\[ d^N = (d_{0N},...,d_{0N}), \ \ \mbox{ with } \ \ d_{0N} = -1 + \sqrt{1+\dfrac{10}{N}}. \ \]
It is easy to check that it is feasible for $N \geq 4$,
\[ E_{T}(N,d^N) =10, \ \  E_{OAR}(N,d^N) = 10 - Nd_{0N} \longrightarrow 5, \ \ \mbox{ as } N \rightarrow +\infty.\]
Now, we can deduce that $ (P_{20})$ can not have solution $(\overline{N},\overline{d})$ because if $E_{OAR}(\overline{N},\overline{d})  =  5$
together with the restriction $E_{T}(\overline{N},\overline{d}) \geq 10$ we get $\sum_{i=1}^{\overline{N}} \overline{d}_{i} \geq 5,$
but this is incompatible with $E_{OAR}(\overline{N},\overline{d})  =  5.$
\item [c)] In contrast, when $\omega_{\delta} < 0$, it is easy to show that problem $(P_2),$ with only the lower bound constraints $d_i \geq 0$, has as solution $(\overline{N},\overline{d})$ with $\overline{N} = 1$ and $\overline{d} = \dfrac{-\alpha_{T} + \sqrt{(\alpha_{T})^{2}+4\beta_{T} \gamma_T}}{2\beta_T },$ see \cite{Mizuta2012}.

\item [d)] There are some particular cases in which the solution of $(P_{2})$ can be determined from previous argumentations very easily. For instance, when $\rho_T = 1$, because then $\left( \overline{N},\overline{d}^{\overline{N}} \right)  = (1,d_{min})$ is the only admissible pair. Also when $\rho_T > 1$ and $\lceil \lambda_T \rceil = \lfloor  \rho_T \rfloor$, because only the large values for $N$ are feasible (i.e., those verifying $N \geq \rho_{T}$) and consequently $(\overline{N}, \overline{d}^{\overline{N}})$ with $\overline{N} = \lceil \rho_T \rceil$  and $\overline{d}^{\overline{N}} = (d_{min},...,d_{min})$ is the solution of $(P_{2})$.
\end{itemize}
\label{R100}
\end{remark}

When $ N \in [ \lambda_{T}, \rho_{T})  \cap \mathbb{N}$, we know that $d^N = (d_{min},...,d_{min})$ is not a solution of $(P_{2}^{N})$, because it is not even feasible. Hence, we can simplify the problem $(P_{2}^{N})$ arguing in a similar way as in the proof of Theorem \ref{teoAct}.

\begin{theorem}
Let us assume  $d_{min}>0$ and $N \in \left[ \lambda_{T}, \rho_{T} \right) \cap \mathbb{N}$. Then, the inequality constraint  of the problem $(P_{2}^{N})$ has to be  active at $\overline{d}^{N}$, being $\overline{d}^{N}$ a solution of $(P_{2}^{N})$.
\label{desig}
\end{theorem}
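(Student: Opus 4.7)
The plan is to mimic the argument of Theorem \ref{teoAct}, with roles of the tumor and the OAR interchanged and the direction of optimization reversed. Specifically, I would argue by contradiction: suppose $\overline{d}^N$ is a solution of $(P_2^N)$ at which the tumor-effect constraint is strict, i.e.,
\[ E_T(N,\overline{d}^N) = \alpha_T \sum_{i=1}^N \overline{d}_i + \beta_T \sum_{i=1}^N \overline{d}_i^{\,2} > \gamma_T. \]
Then I would perturb one component slightly \emph{downward} and show that this both preserves feasibility and strictly reduces the objective $\tilde{E}_{OAR}^N$, contradicting optimality.

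The key observation that makes such a downward perturbation available is the hypothesis $N < \rho_T$. By the definition (\ref{rhoT}) of $\rho_T$, this implies $N\,\varphi_T(d_{min}) < \gamma_T$, so the vector $(d_{min},\ldots,d_{min})$ is not feasible for $(P_2^N)$. Since $\overline{d}^N$ is feasible, at least one coordinate must satisfy $\overline{d}_j > d_{min}$; otherwise $E_T(N,\overline{d}^N) \leq N\,\varphi_T(d_{min}) < \gamma_T$, a contradiction. Without loss of generality fix such an index $j$.

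Next I would perform the perturbation. For $\epsilon>0$, define
\[ \overline{d}^{\,N}(\epsilon) = (\overline{d}_1,\ldots,\overline{d}_{j-1},\overline{d}_j-\epsilon,\overline{d}_{j+1},\ldots,\overline{d}_N). \]
Since $\overline{d}_j>d_{min}$ and $\overline{d}_j\le d_{max}$, the box constraints continue to hold for all sufficiently small $\epsilon>0$; by the assumed strict inequality $E_T(N,\overline{d}^N)>\gamma_T$ and continuity of $E_T$ in $d_j$, the tumor constraint $E_T(N,\overline{d}^{\,N}(\epsilon))\geq\gamma_T$ also persists for small $\epsilon$. Hence $\overline{d}^{\,N}(\epsilon)$ is feasible.

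Finally I would compare objective values. Because $\overline{d}_j>d_{min}>0$ and $\varphi_0$ is strictly increasing on $[0,+\infty)$, we have $\varphi_0(\overline{d}_j-\epsilon)<\varphi_0(\overline{d}_j)$ for all small $\epsilon>0$, which yields
\[ \tilde{E}_{OAR}^N(\overline{d}^{\,N}(\epsilon)) = \tilde{E}_{OAR}^N(\overline{d}^N) - \bigl[\varphi_0(\overline{d}_j)-\varphi_0(\overline{d}_j-\epsilon)\bigr] < \tilde{E}_{OAR}^N(\overline{d}^N), \]
contradicting the optimality of $\overline{d}^N$. Thus the constraint must be active. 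I do not expect any real obstacle here; the only point requiring a bit of care is that the existence of a coordinate strictly above $d_{min}$ genuinely uses the hypothesis $N<\rho_T$, in contrast with Theorem \ref{teoAct} where the analogous role was played by $\lambda_0<N$.
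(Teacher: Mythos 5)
Your proof is correct and is exactly the adaptation the paper intends: the authors omit the details, stating only that one argues ``in a similar way as in the proof of Theorem \ref{teoAct}'' after noting that $(d_{min},\ldots,d_{min})$ is infeasible when $N<\rho_T$, and your downward perturbation at a coordinate $\overline{d}_j>d_{min}$ fills in precisely those details. No gaps.
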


From now on, the restriction will be taken as one of equality, that is,
$$ \alpha_{T} \sum_{i=1}^{N} d_{i} + \beta_{T}\sum_{i=1}^{N} d_{i}^{2} = \gamma_{T}. $$

Again, applying the same procedure as for $(P_1)$ in Section \ref{Smax} we have

$$ \sum_{i=1}^{N} d_{i}^{2}=\dfrac{1}{\beta_{T}} \left[ \gamma_{T} - \alpha_{T} \sum_{i=1}^{N} d_{i} \right],  $$
and the objective function will read

\begin{equation}
\tilde{E}^N_{OAR}(d)=\left[  \alpha_{0} - \dfrac{\beta_{0}\alpha_{T} \delta }{\beta_{T}} \right] \delta \sum_{i=1}^{N} d_{i} + \dfrac{\beta_{0}\delta^2 \gamma_{T}}{\beta_{T}}.
\label{e1}
\end{equation}

Now, it is clear that we can simplify the formulation of the problem $(P_{2}^{N})$, as follows:

\begin{proposition}
Let us assume  $d_{min}>0$ and $N \in \left[ \lambda_{T}, \rho_{T} \right) \cap \mathbb{N}$.
\begin{itemize}
\item [i)] If  $\omega_{\delta}>0$, then $(P_{2}^{N})$ is equivalent to
$$ (P_{2}^{N,+}) \left\{
\begin{array}{ll}
\textrm{Maximize  } \displaystyle \sum_{i=1}^{N} d_{i}, \\
\textrm{subject to } d \in \mathbb{K}_{2}^{N},
\end{array}
\right. $$
where
\begin{equation}
\mathbb{K}_{2}^{N} = \lbrace d \in \mathbb{R}^{N} : E_{T}(N,d) = \gamma_{T}, d_{min} \leq d_{i} \leq d_{max}, 1 \leq i \leq N \rbrace.
\label{E695}
\end{equation}

\item [ii)] If $\omega_{\delta}<0$, then $(P_{2}^{N})$ is equivalent to
$$ (P_{2}^{N,-}) \left\{
\begin{array}{ll}
\textrm{Minimize  } \displaystyle \sum_{i=1}^{N} d_{i}, \\
\textrm{subject to } d \in \mathbb{K}_{2}^{N}.
\end{array}
\right. $$
\item [iii)] If $\omega_{\delta}=0,$ then every feasible point for $(P_{2}^{N})$ is a solution.
\end{itemize}
\label{PROP2}
\end{proposition}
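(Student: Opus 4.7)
The plan mirrors the proof of Proposition \ref{PROP1}. By Theorem \ref{desig}, any solution $\overline{d}^{N}$ of $(P_{2}^{N})$ lies on the active constraint $E_{T}(N,d)=\gamma_{T}$, and one checks, exactly as was done in Section \ref{Smax}, that this forces the set of optimizers of $(P_{2}^{N})$ to coincide with the set of optimizers of the same minimization taken over the reduced feasible set $\mathbb{K}_{2}^{N}$ defined in \eqref{E695}. Thus the first step is to replace the inequality by an equality without loss of generality.

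Next, on $\mathbb{K}_{2}^{N}$ the identity $\alpha_{T}\sum d_{i}+\beta_{T}\sum d_{i}^{2}=\gamma_{T}$ lets me solve for $\sum d_{i}^{2}$ in terms of $\sum d_{i}$; substituting this into $\tilde{E}^{N}_{OAR}(d)=\alpha_{0}\delta\sum d_{i}+\beta_{0}\delta^{2}\sum d_{i}^{2}$ produces the affine expression already displayed in \eqref{e1}, of the form $\kappa\sum d_{i}+C$, with the constant $C=\beta_{0}\delta^{2}\gamma_{T}/\beta_{T}$ and slope $\kappa:=\bigl[\alpha_{0}-\beta_{0}\alpha_{T}\delta/\beta_{T}\bigr]\delta$. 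Hence on $\mathbb{K}_{2}^{N}$ the original objective depends only on $\sum d_{i}$ and is linear in it.

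I would then read off the three cases from the sign of $\kappa$. A one-line computation, multiplying the definition \eqref{Ew} of $\omega_{\delta}$ by the positive factor $\beta_{0}\delta^{2}$, yields $\kappa=-\beta_{0}\delta^{2}\,\omega_{\delta}$, so $\kappa$ and $\omega_{\delta}$ have opposite signs. Consequently, when $\omega_{\delta}>0$ one has $\kappa<0$ and minimizing $\tilde{E}^{N}_{OAR}$ on $\mathbb{K}_{2}^{N}$ coincides with maximizing $\sum d_{i}$ on $\mathbb{K}_{2}^{N}$, which is exactly $(P_{2}^{N,+})$; when $\omega_{\delta}<0$ one has $\kappa>0$ and the problem becomes $(P_{2}^{N,-})$; and when $\omega_{\delta}=0$ the objective reduces to the constant $C$ on $\mathbb{K}_{2}^{N}$, so every feasible point of $(P_{2}^{N})$ lying on the active constraint, and hence every feasible point of $(P_{2}^{N})$ by Theorem \ref{desig}, is a solution.

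There is essentially no serious obstacle: the substance of the argument has already been done in the derivation of \eqref{e1}. The only pinches of care needed are the sign bookkeeping linking $\kappa$ and $\omega_{\delta}$, and being explicit that ``equivalent'' is meant in the strong sense of coincidence of optimizers (not merely of optimal values), which is exactly what Theorem \ref{desig} combined with the strict monotonicity of a nonconstant affine function on a compact connected domain delivers.
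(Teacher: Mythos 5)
Your proof is correct and follows essentially the same route as the paper's: Theorem \ref{desig} justifies passing to the equality constraint defining $\mathbb{K}_{2}^{N}$, the substitution producing \eqref{e1} makes the objective affine in $\sum_{i=1}^N d_i$ with slope $-\beta_{0}\delta^{2}\omega_{\delta}$, and the three cases are read off from the sign of that slope, exactly as in the one-line identity the paper uses. The only slight imprecision is in case iii): Theorem \ref{desig} says optimizers are active, not that all feasible points are, so ``every feasible point is a solution'' must be read (as the paper does just before the proposition) with feasibility meaning membership in $\mathbb{K}_{2}^{N}$.
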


\begin{proof}
It is enough to take into account that $\alpha_{0} - \dfrac{\beta_{0}\alpha_{T}\delta}{\beta_{T}} = - \beta_{0} \delta \omega_{\delta},$ where $\omega_{\delta}$ is defined in (\ref{Ew}).
\end{proof}

Once we have seen that $(P_{1}^{N,+})$ and $(P_{2}^{N,+})$ are essentially the same problem (resp. $(P_{1}^{N,-})$ and $(P_{2}^{N,-})$), we can ``translate"  the results obtained in section
\ref{SP1N} to the current context as follows:

\begin{theorem}
Let us assume $d_{min}>0$ and $N \in \left[ \lambda_{T}, \rho_{T}  \right) \cap \mathbb{N}$. Then,
\begin{itemize}
 \item [i)] the unique solution to $(P_{2}^{N,+})$ is given by $\overline{d}^{N}=(\overline{d}_{1},...,\overline{d}_{1})$ with
\begin{equation}
\overline{d}_{1}=\dfrac{-\alpha_{T} N + \sqrt{(\alpha_{T}N)^{2}+4\beta_{T} N \gamma_{T}}}{2\beta_{T} N}.
\label{E700}
\end{equation}

\item [ii)] a solution for $(P_{2}^{N,-})$ has one of the following forms:
\begin{equation}
\overline{d}^{N}=(\underbrace{d_{min},...,d_{min}}_{K},\underbrace{d_{max},...,d_{max}}_{N-K}),
\label{E710}
\end{equation}
with
\begin{equation}
K = \dfrac{N\varphi_{T}(d_{max}) - \gamma_{T}}{\varphi_{T}(d_{max})-\varphi_{T}(d_{min})} \in  \mathbb{N}  \cup \{0\}, \ \ \mbox{ or }
\label{E720}
\end{equation}
\begin{equation}
\overline{d}^{N}=(\underbrace{d_{min},...,d_{min}}_{K},d^{*},\underbrace{d_{max},...,d_{max}}_{N-K-1}),
\label{E730}
\end{equation}
with
\begin{equation}
K = \lfloor \dfrac{N\varphi_{T}(d_{max}) - \gamma_{T}}{\varphi_{T}(d_{max})-\varphi_{T}(d_{min})}\rfloor,
\label{E740}
\end{equation} and $d^* \in (d_{min}, d_{max})$ satisfying
\begin{equation}
\varphi_{T}(d^{*}) = \gamma_{T}  - K \varphi_{T}(d_{min}) - (N-K-1) \varphi_{T}(d_{max}).
\label{E750}
\end{equation}

\end{itemize}
\label{teo22}
\end{theorem}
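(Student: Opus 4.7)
The plan is to exploit the structural isomorphism noted immediately before the statement: under the substitutions $\alpha_{0}\delta \leftrightarrow \alpha_{T}$, $\beta_{0}\delta^{2} \leftrightarrow \beta_{T}$, $\gamma_{OAR} \leftrightarrow \gamma_{T}$ (and hence $\varphi_{0} \leftrightarrow \varphi_{T}$), the feasible set $\mathbb{K}_{2}^{N}$ has exactly the same algebraic form as $\mathbb{K}_{1}^{N}$, and the objectives of $(P_{2}^{N,\pm})$ coincide with those of $(P_{1}^{N,\pm})$. Consequently the arguments of Theorems \ref{teo2} and \ref{teo2-} transfer verbatim; the only point that genuinely requires checking is that the admissibility range $N \in [\lambda_{T},\rho_{T}) \cap \mathbb{N}$ plays the same role here as $N \in (\lambda_{0},\rho_{0}] \cap \mathbb{N}$ played there.

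For part (i) I would mimic the Cauchy--Schwarz argument of Theorem \ref{teo2}. Starting from the equality $\alpha_{T}\sum d_{i} + \beta_{T}\sum d_{i}^{2} = \gamma_{T}$ together with $(\sum d_{i})^{2} \leq N\sum d_{i}^{2}$, one gets $\tilde{q}(\sum d_{i}) \leq 0$ with $\tilde{q}(z) = \tfrac{\beta_{T}}{N}z^{2} + \alpha_{T} z - \gamma_{T}$. The positive root $z_{2} = N\overline{d}_{1}$, with $\overline{d}_{1}$ as in (\ref{E700}), is the maximum value of $\sum d_{i}$ on the admissible set; this maximum is attained exactly when Cauchy--Schwarz is an equality, i.e.\ when all coordinates are equal. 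Verifying $\overline{d}_{1} \in [d_{min},d_{max}]$ reduces to $\varphi_{T}(d_{min}) \leq \gamma_{T}/N \leq \varphi_{T}(d_{max})$, which is precisely the condition $N \in [\lambda_{T},\rho_{T})$ (up to the boundary at $\rho_{T}$, which only affects whether $\overline{d}_{1} > d_{min}$ strictly).

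For part (ii) the Lagrange multiplier analysis of Theorem \ref{teo2-} applies identically with $\alpha_{0}\delta, \beta_{0}\delta^{2}$ replaced by $\alpha_{T}, \beta_{T}$. An interior solution would force all coordinates equal and therefore coincide with the maximizer from part (i), which is impossible for a minimizer. Hence at least one coordinate lies in $\{d_{min}, d_{max}\}$, and the same dimensional-reduction lemma (reducing $(P_{2}^{N,-})$ to $(P_{2}^{N-1,-})$ with an updated right-hand side) lets us iterate until every coordinate is at the boundary except at most one. The count $K$ of coordinates equal to $d_{min}$ is then forced by the equality constraint: if all remaining coordinates are $d_{max}$ the balance yields $K\varphi_{T}(d_{min}) + (N-K)\varphi_{T}(d_{max}) = \gamma_{T}$, giving (\ref{E720}); otherwise the strict monotonicity of $\varphi_{T}$ on $[0,+\infty)$ locates $K$ in an interval of length one, forcing (\ref{E740}), with the single interior component $d^{*}$ determined by the quadratic (\ref{E750}).

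The main obstacle will be purely bookkeeping: keeping track of which endpoint conditions ($\lambda_{T}$ vs.\ $\rho_{T}$, closed vs.\ open) correspond to which extreme of the admissible doses, since the inequality direction in $(P_{2})$ is reversed with respect to $(P_{1})$. No new analytic idea is needed; the argument is a careful translation of the proofs already given in Section \ref{SP1N}.
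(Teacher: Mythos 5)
Your proposal is correct and follows essentially the same route as the paper, which proves Theorem \ref{teo22} precisely by translating Theorems \ref{teo2} and \ref{teo2-} under the substitutions $\alpha_{0}\delta \leftrightarrow \alpha_{T}$, $\beta_{0}\delta^{2} \leftrightarrow \beta_{T}$, $\gamma_{OAR} \leftrightarrow \gamma_{T}$. Your attention to the reversed inequality and the swapped roles of $\lambda_{T}$ and $\rho_{T}$ at the endpoints is exactly the only point the translation requires.
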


\subsection{Analytical solution for $(P_{2})$}
\label{SP2}

As a consequence of previous results we arrive to the main theorems of this section that completely clarifies the situation concerning the problem $(P_2)$. Recalling that $\omega_{\delta} = \frac{\alpha_{T}}{\beta_{T}} - \frac{\alpha_{0}}{\beta_{0} \delta}$  (see (\ref{Ew})),
we will see that $\rho_T$ and the sign of $\omega_{\delta}$ are the determinant factors in this analysis.

Again, the case $\omega_{\delta} = 0$ is easily solved, because the function to be minimized and the one defining the restriction are proportional.
Hence, the following result can be derived as Proposition \ref{PPROP2}:

\begin{proposition}
Let us assume $d_{min}>0,$ $\rho_T > 1,$ $\lceil \lambda_T \rceil < \lfloor  \rho_T \rfloor$ and $\omega_{\delta} = 0.$
Then any feasible pair $(\overline{N},d)$  with $E_T(\overline{N},d) = \gamma_T$ is a solution to problem $(P_2)$.
In particular, the pairs $(\overline{N}, \overline{d}^{\overline{N}})$ with $\overline{N} \in \{\lceil \lambda_T \rceil, \ldots, \lfloor  \rho_T \rfloor\}$ and $\overline{d}^{\overline{N}} = (\overline{d}_{1},...,\overline{d}_{1}),$ where
\begin{equation}
\overline{d}_{1}=\dfrac{-\alpha_{T} \overline{N} + \sqrt{(\alpha_{T}\overline{N})^{2}+4\beta_{T} \overline{N} \gamma_T}}{2\beta_T \overline{N}}.
\label{E800}
\end{equation}
\label{PPROP3}
\end{proposition}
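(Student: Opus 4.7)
The plan is to exploit the proportionality between $E_{OAR}$ and $E_T$ that comes from the hypothesis $\omega_{\delta}=0$, mirroring the argument used for Proposition \ref{PPROP2}. The condition $\omega_{\delta}=\alpha_T/\beta_T - \alpha_0/(\beta_0\delta) = 0$ is equivalent to $\alpha_0 \delta = (\beta_0\delta^2/\beta_T)\,\alpha_T$. Plugging this into the definition of $E_{OAR}$ yields
\[
E_{OAR}(N,d) = \frac{\beta_0 \delta^2}{\beta_T}\Bigl(\alpha_T \sum_{i=1}^{N} d_i + \beta_T \sum_{i=1}^{N} d_i^2 \Bigr) = \frac{\beta_0 \delta^2}{\beta_T}\, E_T(N,d),
\]
so minimizing $E_{OAR}$ over the feasible set is equivalent to minimizing $E_T$ subject to $E_T(N,d)\geq \gamma_T$ and the box constraints. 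Consequently the infimum value equals $(\beta_0\delta^2/\beta_T)\gamma_T$ and is attained precisely at those feasible pairs $(\overline{N},d)$ for which $E_T(\overline{N},d)=\gamma_T$. This establishes the first (general) claim.

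For the ``In particular'' part, I would exhibit explicit minimizers by taking $\overline{N}\in\{\lceil\lambda_T\rceil,\ldots,\lfloor\rho_T\rfloor\}$ and $\overline{d}^{\overline{N}}=(\overline{d}_1,\ldots,\overline{d}_1)$ with $\overline{d}_1$ given by (\ref{E800}). By construction, $\overline{d}_1$ is the positive root of the quadratic
\[
\beta_T \overline{N}\, x^2 + \alpha_T \overline{N}\, x - \gamma_T = 0,
\]
so $\overline{N}\,\varphi_T(\overline{d}_1) = \gamma_T$, i.e.\ $E_T(\overline{N},\overline{d}^{\overline{N}})=\gamma_T$. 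It remains to check the box constraints $d_{min}\le \overline{d}_1\le d_{max}$. Since $\varphi_T$ is strictly increasing on $[0,+\infty)$, these two inequalities are respectively equivalent to $\overline{N}\,\varphi_T(d_{min})\le \gamma_T\le \overline{N}\,\varphi_T(d_{max})$, that is, to $\overline{N}\in[\lambda_T,\rho_T]$ (using that the assumption $\rho_T>1$ forces $\rho_T=\gamma_T/\varphi_T(d_{min})$ and, likewise, that the relevant alternative in the definition of $\lambda_T$ is $\gamma_T/\varphi_T(d_{max})$, since otherwise the range $\{\lceil\lambda_T\rceil,\ldots,\lfloor\rho_T\rfloor\}$ is treated by the easy subcase). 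Both inclusions hold for every $\overline{N}$ in the stated integer range, so each of these pairs is feasible and achieves $E_T=\gamma_T$, hence is a solution by the first part.

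There is no serious obstacle here; the proof is essentially a book-keeping exercise once the proportionality $E_{OAR}=(\beta_0\delta^2/\beta_T)E_T$ is noted. The only mildly delicate point is confirming that the assumptions $\rho_T>1$ and $\lceil\lambda_T\rceil<\lfloor\rho_T\rfloor$ guarantee both that the set $\{\lceil\lambda_T\rceil,\ldots,\lfloor\rho_T\rfloor\}$ is nonempty and that $\lambda_T$ and $\rho_T$ reduce to the non-trivial branches of their defining maxima, so that the feasibility check through $\overline{N}\in[\lambda_T,\rho_T]$ is genuine rather than vacuous.
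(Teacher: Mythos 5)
Your proof is correct and follows essentially the same route as the paper, which derives this result exactly as Proposition \ref{PPROP2}: the hypothesis $\omega_{\delta}=0$ makes $E_{OAR}=(\beta_0\delta^2/\beta_T)E_T$, so the minimum is attained precisely when the constraint $E_T\geq\gamma_T$ is active, and the uniform-dose pairs realize this with $\overline{d}_1\in[d_{min},d_{max}]$ if and only if $\overline{N}\in[\lambda_T,\rho_T]$. Your extra care about which branch of the maxima defining $\lambda_T$ and $\rho_T$ is active is a harmless refinement of the same bookkeeping.
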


Let us now continue by studying the more frequent case $\omega_{\delta} > 0$. Here, we have to distinguish two different situations, depending on $\rho_T \in \mathbb{N}$ or not:

\begin{theorem}
Let us assume that $d_{min} > 0,$ $\lceil \lambda_T \rceil < \lfloor  \rho_T \rfloor$ and $\omega_{\delta} > 0$.
\begin{itemize}
\item [a)] If $\rho_T \in \mathbb{N}$, $\rho_T \geq 2,$ then the unique solution to problem $(P_{2})$ is given by $\overline{N} = \rho_T$ and  $\overline{d}^{\overline{N}}=(d_{min},...,d_{min}),$
\item [b)] If $\rho_T \not\in \mathbb{N},$ then the unique solution to problem $(P_{2})$ is given by $\left( \overline{N},\overline{d}^{\overline{N}} \right),$ where:
\begin{itemize}
\item [i)]  $\overline{N} = \lceil \rho_T \rceil$ and  $\overline{d}^{\overline{N}}=(d_{min},...,d_{min}),$  or
\item [ii)]  $\overline{N} = \lfloor \rho_T \rfloor$ and
$\overline{d}^{\overline{N}}=(\overline{d}_{1},...,\overline{d}_{1})$, with $\overline{d}_{1}$ given by (\ref{E800}).
\end{itemize}
\end{itemize}
\label{TPROP21a}
\end{theorem}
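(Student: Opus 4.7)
The plan is to enumerate all candidates for the optimal $N$ over the feasible set $[\lambda_T,\infty)\cap\mathbb{N}$ and compare the associated values $\Phi(N):=E_{OAR}(N,\overline d^N)$ attained at the solution $\overline d^N$ of $(P_2^N)$. First I would split the feasible range at $\rho_T$: when $N\geq\lceil\rho_T\rceil$ (respectively $N\geq\rho_T$ in case (a)), the constant vector $(d_{min},\ldots,d_{min})$ is feasible for $(P_2^N)$ and, since $E_{OAR}$ is strictly coordinatewise increasing for $d_i\geq 0$, it is the unique solution there, with value $\Phi(N)=N\varphi_0(d_{min})$. On this upper range $\Phi$ is strictly increasing in $N$, so only the smallest such $N$ is retained: $N=\rho_T$ in case (a) and $N=\lceil\rho_T\rceil$ in case (b).

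On the complementary lower range $N\in[\lambda_T,\rho_T)\cap\mathbb{N}$, Theorem \ref{teo22}~(i) applies (because $\omega_\delta>0$) and gives the unique solution $\overline d^N=(\overline d_1,\ldots,\overline d_1)$ with $\overline d_1$ as in (\ref{E700}). Next I would substitute the active equality constraint $N\varphi_T(\overline d_1)=\gamma_T$ into $E_{OAR}$ exactly as in (\ref{e1}); using the identity $\alpha_0-\beta_0\delta\alpha_T/\beta_T=-\beta_0\delta\omega_\delta$ this rewrites
\[ \Phi(N)=-\delta^2\beta_0\omega_\delta\,\psi(N)+\frac{\beta_0\delta^2\gamma_T}{\beta_T},\qquad \psi(N):=N\overline d_1=\frac{-\alpha_T N+\sqrt{\alpha_T^2 N^2+4\beta_T N\gamma_T}}{2\beta_T}. \]
The crucial step is to prove that $\psi$ is strictly increasing on $(0,\infty)$; this is a short calculus check that reduces, after differentiating and squaring, to the manifest inequality $4\beta_T^2\gamma_T^2>0$. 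Since $\omega_\delta>0$, $\Phi$ is then strictly decreasing on this lower range, so only the largest feasible $N$ here is relevant: $N=\rho_T-1$ in case (a) and $N=\lfloor\rho_T\rfloor$ in case (b).

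To conclude I would glue the two ranges. In case (a), the identity $\varphi_T(d_{min})=\gamma_T/\rho_T$ forces $\overline d_1(\rho_T)=d_{min}$, so the two formulas for $\Phi$ coincide at $N=\rho_T$; combined with the strict monotonicities on each side, this exhibits a unique minimum at $N=\rho_T$ with the constant-$d_{min}$ configuration. In case (b), the only two candidates for the overall optimum are $N=\lfloor\rho_T\rfloor$ with the equal-dose vector of (ii) and $N=\lceil\rho_T\rceil$ with the minimum-dose vector of (i); selecting whichever gives the smaller $\Phi$ yields the unique solution, with uniqueness in each case inherited either from Theorem \ref{teo22}~(i) or from the strict coordinatewise monotonicity of $E_{OAR}$. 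The main obstacle I anticipate is precisely the monotonicity of $\psi$ on the lower range; once it is in place, everything else reduces to comparisons between explicit closed-form expressions.
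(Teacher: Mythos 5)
Your proposal is correct and follows essentially the same route as the paper's Appendix $2$: split the feasible range of $N$ at $\rho_T$, take the all-$d_{min}$ vector on the upper range, and on the lower range use Proposition \ref{PROP2} and the substitution (\ref{e1}) to reduce everything to the strict monotonicity of $\psi(N)=N\overline{d}_{1}$, keeping only the extreme candidates on each side. The only (harmless) variation is the endgame of case a), where the paper compares the two candidates via the linear auxiliary function $H_1$, while you instead note that $\overline{d}_{1}(\rho_T)=d_{min}$ makes the two branches of $\Phi$ agree at the junction $N=\rho_T$, so the opposite strict monotonicities on either side give the same strict inequality.
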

The proof is similar to that of Theorem \ref{TPROP2} and postponed to Appendix $2$.

Finally, the case $\omega_{\delta} < 0$ is studied in the next theorem and its proof is detailed in Appendix $3$.

\begin{theorem}
Let us assume $d_{min} > 0,$ $\rho_T > 1,$ $\lceil \lambda_T \rceil < \lfloor  \rho_T \rfloor$ and  $\omega_{\delta} < 0$. Then,
a solution to problem $(P_{2})$ is given by $\left( \overline{N},\overline{d}^{\overline{N}} \right),$ with $\overline{N} = \lceil \lambda_T \rceil$ and
\begin{itemize}
\item [a)]  $\overline{d}^{\overline{N}}=(\underbrace{d_{min},...,d_{min}}_{K},\underbrace{d_{max},...,d_{max}}_{\overline{N}-K}),$
when $K$ defined by (\ref{E720}), with $N = \overline{N},$ belongs to $\mathbb{N} \cup \{0\};$ otherwise,
\item [b)] $\overline{d}^{\overline{N}}=(\underbrace{d_{min},...,d_{min}}_{K},d^{*},\underbrace{d_{max},...,d_{max}}_{\overline{N}-K-1}),$
with $K$ defined by (\ref{E740}) and $d^\star$ satisfies (\ref{E750}), both with $N = \overline{N}.$
\end{itemize}
\label{TPROP21b}
\end{theorem}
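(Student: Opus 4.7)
The plan is to mirror the proof of Theorem \ref{TPROP3} in Appendix 1, translating it from the ``maximize tumor effect'' regime to the present ``minimize OAR effect'' one. The first step is to delimit the feasible values of $N$: the upper bound $d_i \leq d_{max}$ combined with $E_T(N,d) \geq \gamma_T$ forces $N \geq \lceil \lambda_T \rceil$. Among the ``large'' choices $N \geq \lceil \rho_T \rceil$, the trivial vector $(d_{min},\ldots,d_{min})$ is admissible and evidently minimizes $\tilde{E}_{OAR}^N$, with value $N\varphi_0(d_{min})$ strictly increasing in $N$, so the best such $N$ is $\lceil \rho_T \rceil$. For $N \in [\lceil \lambda_T \rceil, \lfloor \rho_T \rfloor] \cap \mathbb{N}$ (nonempty by hypothesis), Theorem \ref{teo22}-ii supplies explicit solutions of the prescribed forms (a) and (b).

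Since $\omega_\delta < 0$, Proposition \ref{PROP2}-ii reduces $(P_2^N)$ to minimizing $\sum_i d_i$ over $\mathbb{K}_2^N$, and (\ref{e1}) expresses the optimal OAR value $V_{OAR}(N) := \tilde{E}_{OAR}^N(\overline{d}^N)$ as an affine function of $V(N) := \sum_i \overline{d}_i^N$ with positive coefficient $-\beta_0 \delta^2 \omega_\delta$. Hence strict monotonicity of $V(N)$ in $N$ transfers to $V_{OAR}(N)$. The next step is to prove $V(N)$ is strictly increasing across the nontrivial range. In form (a), using $K(N) = (N\varphi_T(d_{max}) - \gamma_T)/(\varphi_T(d_{max}) - \varphi_T(d_{min}))$, a short calculation gives
\[ V(N) = K(N)\, d_{min} + (N - K(N))\, d_{max}, \qquad \frac{dV}{dN} = \frac{\beta_T\, d_{min}\, d_{max}(d_{max} - d_{min})}{\varphi_T(d_{max}) - \varphi_T(d_{min})} > 0. \]
Form (b) yields the same positive slope after substituting the characterization of $d^\star$ from (\ref{E750}) into $V(N) = K\, d_{min} + d^\star + (N-K-1)\, d_{max}$.

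The final step is to glue the body and the tail. A direct verification using $\gamma_T = \rho_T \varphi_T(d_{min})$ at $N = \rho_T$ shows that the body formula and the tail formula for $V_{OAR}$ agree there, both yielding $\rho_T \varphi_0(d_{min})$. Combined with monotonicity in the body and $\lceil \rho_T \rceil \geq \rho_T$, this gives $V_{OAR}(\lfloor \rho_T \rfloor) \leq \rho_T \varphi_0(d_{min}) \leq \lceil \rho_T \rceil \varphi_0(d_{min}) = V_{OAR}(\lceil \rho_T \rceil)$, so the global minimum of $V_{OAR}$ over the whole feasible range is attained at $\overline{N} = \lceil \lambda_T \rceil$. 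Theorem \ref{teo22}-ii then supplies the two possible structures of $\overline{d}^{\overline{N}}$ claimed in the statement.

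The main obstacle I anticipate is the bookkeeping around the transitions between form (a) and form (b) as $N$ varies (specifically, the integer-vs.-non-integer character of $K(N)$): one has to verify that the slope computed piecewise patches together into a globally monotone $V$, which requires checking continuity of $V$ across each transition. A possible alternative, paralleling the linear-function trick in the proof of Theorem \ref{TPROP2}, is to construct a single affine function $H$ in a convenient parameter (for example $\alpha_T/\beta_T$) that encodes $V_{OAR}(N_1) - V_{OAR}(N_2)$ for any two feasible $N_1 < N_2$, and to verify that its sign is forced by $\omega_\delta < 0$; this would sidestep the piecewise structure at the cost of more laborious algebra.
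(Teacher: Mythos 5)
Your architecture matches the paper's: split the feasible range of $N$ into the tail $N\ge\lceil\rho_T\rceil$, where $(d_{min},\dots,d_{min})$ is optimal with value $N\varphi_0(d_{min})$ increasing in $N$, and the body $[\lceil\lambda_T\rceil,\lfloor\rho_T\rfloor]\cap\mathbb{N}$, where Theorem \ref{teo22}-ii) applies; then use Proposition \ref{PROP2}-ii) and (\ref{e1}) to reduce everything to the total dose $V(N)=\sum_i\overline{d}^N_i$. However, both quantitative steps contain genuine gaps. For the monotonicity of $V(N)$ over the body, your slope computation is exact only in form (a), i.e.\ when $K(N)$ is an integer. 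In form (b) one has $V(N)=Kd_{min}+d^{*}+(N-K-1)d_{max}$ with $d^{*}$ determined by the quadratic relation (\ref{E750}); writing $K(N)=K+\theta$ with $\theta\in(0,1)$, strict convexity of $\varphi_T$ gives $d^{*}\ge(1-\theta)d_{max}+\theta d_{min}$, so the actual value lies \emph{above} the linear interpolant $K(N)d_{min}+(N-K(N))d_{max}$ whose slope you computed. Moreover, on a piece where $K$ is constant the continuous extension of $V$ has derivative $d_{max}-\varphi_T(d_{max})/\varphi_T'(d^{*})=\beta_T d_{max}(2d^{*}-d_{max})/\varphi_T'(d^{*})$, which is negative whenever $d^{*}<d_{max}/2$; so the function you invoke is not globally monotone in the real variable $N$. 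What is true is the comparison $V(N)\le V(N+1)$ at \emph{integer} arguments, and establishing it is exactly the three-case Mean-Value-Theorem argument of Appendix $1$, which you flag as an ``obstacle'' but do not carry out.

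The gluing step is broken even granting body monotonicity. Since the actual $V_{OAR}(\lfloor\rho_T\rfloor)$ is bounded \emph{below}, not above, by the form-(a) formula evaluated at $\lfloor\rho_T\rfloor$, the observation that this formula is increasing and equals $\rho_T\varphi_0(d_{min})$ at $N=\rho_T$ yields only a lower bound for $V_{OAR}(\lfloor\rho_T\rfloor)$, whereas you need the upper bound $V_{OAR}(\lfloor\rho_T\rfloor)\le\rho_T\varphi_0(d_{min})$. The inequality is true, but it must be proved as the paper does in Appendix $3$: compare $N_1=\lceil\lambda_T\rceil$ directly with $N_2=\lceil\rho_T\rceil$ via the linear function $H_2(x)=(N_2-K)(xd_{min}+d_{min}^2)-(xd^{*}+(d^{*})^2)-(N_1-K-1)(xd_{max}+d_{max}^2)$, observe $H_2(\alpha_T/\beta_T)\ge0$ from the constraint and the definition of $\rho_T$, and prove $H_2'\ge0$ by exhibiting the feasible point $(N_2d_{min}/N_1,\dots,N_2d_{min}/N_1)$ for $(P_2^{N_1,-})$ (with a separate trivial case when $N_2d_{min}>N_1d_{max}$); the hypothesis $\omega_{\delta}<0$ then transports the sign from $x=\alpha_T/\beta_T$ to $x=\alpha_0/(\beta_0\delta)$. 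This is essentially the ``possible alternative'' in your last sentence; it is not an alternative but the required argument.
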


\begin{remark}
\begin{itemize}
\item [a)] Once more, let us emphasize that when $\omega_{\delta}>0$ the optimal value of $N$ is the largest one within its range of possibilities (i.e. it is a hyperfractionated type treatment),
while in the case $\omega_{\delta} < 0$ the optimal value is the smallest one (i.e. it is a hypofractionated type treatment).
This classification in terms of $\omega_{\delta}$ was described in \cite{Mizuta2012}, considering nonnegative doses.
\item [b)] For the hypofractionated case, the single exposure is chosen in \cite{Mizuta2012} as the preferred one. But this dose could be too large in practice and then two, three or more fractions would have
to be tried until an acceptable one is found. This fact is remarked in \cite{Bortfeld2015} by saying that the case $\omega_{\delta} \leq 0$
``needs careful consideration since the validity of the model may be limited if $N$ is small and the dose per fraction is large".
Under our approach, this difficulty is overcome and we get the optimal number of dose fractions directly and its value, as in the other case.
\item [c)] The uniqueness of solution fails  when $\omega_{\delta} < 0$ because (as it was said) any permutation of the coordinates of the indicated solution provides a new one.
\end{itemize}
\end{remark}

In the following examples we will show that all the above possibilities mentioned in Theorems \ref{TPROP21a} and \ref{TPROP21b} can appear in practice:

\begin{example}
Let us continue with the same parameters than in  Example \ref{ejemplo1}, which are:
 $\alpha_{0} = 0.04 \ Gy^{-1}$, $\beta_{0}=0.02 \ Gy^{-2},$ $\alpha_{T} = 0.05 \ Gy^{-1},$ $\beta_{T} = 0.005 \ Gy^{-2},$  $ d_{min}=1 \ Gy$ and $d_{max}=6 \ Gy.$

\begin{itemize}
\item [i)] If $\delta=1$, then $\omega_{\delta}=8 >0$.
Hence, we are considering the problem
$$ (P_{21}) \left\{ \begin{array}{ll}
\textrm{Minimize  } E_{OAR}(N,d)  = 0.04 \displaystyle \sum_{i=1}^{N} d_{i} + 0.02 \sum_{i=1}^{N} d_{i}^{2}, \\
\textrm{subject to } N \in \mathbb{N}, d_{i} \in \mathbb{R} \ \textrm{ such that } \\
E_T(N,d)  =  0.05 \displaystyle \sum_{i=1}^{N} d_{i} + 0.005 \sum_{i=1}^{N} d_{i}^{2} \geq \gamma_T, \\
1 \leq d_{i} \leq 6, i=1,...,N.
\end{array}
\right.  $$
When $\gamma_{T} = 4$, we can easily calculate that $\lambda_T \approx 8.33$ and $\rho_{T}  \approx 72.72$.
Applying Theorem \ref{TPROP21a}-b) the unique solution for $ (P_{21}) $ is one of these two pairs:
$N_1=\lceil \rho_T \rceil = 73$, $\overline{d}^{N_1}=(1,...,1)$ with $E_{OAR}(N_1, \overline{d}^{N_1}) = 4.38$  or
$N_2 = \lfloor \rho_T \rfloor = 72$, $\overline{d}^{N_2}=(\overline{d}_{1},...,\overline{d}_{1})$ with $\overline{d}_{1} \approx 1.00926 \ Gy$ and
the objective function value $E_{OAR}(N_2,\overline{d}^{N_2}) \approx 4.373.$ Clearly, we choose the second pair.

\item [ii)]  Previous case is the most frequent in practice when $\omega_{\delta} >0$, but for some specific values of $\gamma_T$ the alternative indicated
in Theorem \ref{TPROP21a}-b) occurs. For instance, taking $\gamma_{T} = 4.014$ in  $(P_{21})$, the values become $\lambda_T \approx 8.36$ and $\rho_{T}  \approx 72.98$.
Now, the two candidates for being the unique solution for $(P_{21}) $ are:
$N_1=\lceil \rho_T \rceil = 73$, $d^{N_1}=(1,...,1)$ with $E_{OAR}(N_1, \overline{d}^{N_1}) = 4.38$, as before, and
$N_2 = \lfloor \rho_T \rfloor = 72$, $\overline{d}^{N_2}=(\overline{d}_{1},...,\overline{d}_{1})$ with $\overline{d}_{1} \approx 1.0125 \ Gy$ and $E_{OAR}(N_2,d^{N_2}) \approx 4.39.$
It is apparent that here the solution is the first one.

\item [iii)] If $\delta=0.1$, then $\omega_{\delta}=-10 < 0$ and the problem under study is
$$ (P_{22}) \left\{ \begin{array}{ll}
\textrm{Minimize  } E_{OAR}(N,d)  = 0.004 \displaystyle \sum_{i=1}^{N} d_{i} + 0.0002 \sum_{i=1}^{N} d_{i}^{2}, \\
\textrm{subject to } N \in \mathbb{N}, d_{i} \in \mathbb{R} \ \textrm{ such that } \\
E_T(N,d)  = 0.05 \displaystyle \sum_{i=1}^{N} d_{i} + 0.005 \sum_{i=1}^{N} d_{i}^{2} \geq \gamma_T, \\
1 \leq d_{i} \leq 6, i=1,...,N.
\end{array}
\right.  $$

For $\gamma_T = 4.35$, we have that $\lambda_T \approx 9.06$ and $\rho_{T} \approx 79.09$. Therefore, applying Theorem \ref{TPROP21b}-b), we know that a solution for $(P_{22})$ is given by $(\overline{N},\overline{d})$ with $\overline{N}=\lceil \lambda_T \rceil = 10$ and $\overline{d}=(1,d^{*},\underbrace{6,...,6}_{8})$ with $d^{*} \approx 5.77 \ Gy$. In this case, $E_{OAR}(10,\overline{d}) \approx 0.2835$. As in Example \ref{ejemplo1}, for comparison reasons, we can calculate that the solution of solution of $(P_{22}^{11})$ is $\tilde{d}_1 = (1,1, d_1^{*},\underbrace{6,...,6}_{8})$ with $d_1^{*} \approx 5.247 \ Gy$ producing $E_{OAR}(11,\tilde{d}_1) \approx 0.2845$, that is slightly bigger than $E_{OAR}(10,\overline{d}) $, as expected.

\item [iv)] Again, it can be shown that, for some specific values of the parameter $\gamma_{T}$, the alternative exposed in Theorem \ref{TPROP21b}-a) is true.
In particular, for $\gamma_T = 4.375$, we have $\lambda_T \approx 9.11$ and $\rho_{T} \approx 79.54$ and a solution for $(P_{22})$ is given by $(\overline{N},\overline{d})$ with $\overline{N}=\lceil \lambda_T \rceil = 10$ and $\overline{d}=(1,\underbrace{6,...,6}_{9})$.
\end{itemize}
\label{EX6}
\end{example}

As we did for problem $(P_1)$, here we can deduce quite easily how the solution depends with respect to the parameters defining the problem $(P_2)$.

\begin{corollary}
Let us assume $d_{min}>0,$ $\rho_{0} \geq 2$ and $(\overline{N}, \overline{d}^{\overline{N}})$ is a solution of $(P_2)$. Then,
$\overline{N}$ is an increasing function of $\gamma_T$, decreasing with respect to $\alpha_T$ and $\beta_T$ and independent of $\alpha_0, \beta_0$ and $\delta$. Moreover,
\begin{itemize}
\item [a)] When $\omega_{\delta} > 0,$ $\overline{N}$ is also decreasing with respect to $d_{min}$ and independent of $d_{max}$.
\item [b)] When $\omega_{\delta} < 0,$ $\overline{N}$ is also decreasing with respect to $d_{max}$ and independent of $d_{min}$.
\end{itemize}
\label{CO18}
\end{corollary}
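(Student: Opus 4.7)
The plan is to imitate the proof of Corollary~\ref{CO16} one-for-one. By Theorems~\ref{TPROP21a} and \ref{TPROP21b}, together with Proposition~\ref{PPROP3} for the degenerate case $\omega_{\delta}=0$, the optimal number of fractions is characterized by $\overline{N}\in\{\lfloor\rho_T\rfloor,\lceil\rho_T\rceil\}$ when $\omega_{\delta}>0$ and $\overline{N}=\lceil\lambda_T\rceil$ when $\omega_{\delta}<0$, where $\rho_T$ and $\lambda_T$ are the explicit quotients (\ref{rhoT}) and (\ref{lamT}). So the whole corollary reduces to tracking how these two scalars depend on the data.

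The key observation I would invoke is that $\varphi_T(r)=\alpha_T r+\beta_T r^{2}$ is strictly increasing in each of $\alpha_T$, $\beta_T$ and in $r\in(0,+\infty)$, and does not involve $\alpha_0$, $\beta_0$ or $\delta$. Reading off the definitions of $\rho_T$ and $\lambda_T$, one sees at once that both quantities are nondecreasing in $\gamma_T$, nonincreasing in $\alpha_T$ and $\beta_T$, and independent of $\alpha_0,\beta_0,\delta$; additionally, $\rho_T$ is nonincreasing in $d_{min}$ and independent of $d_{max}$, while $\lambda_T$ is nonincreasing in $d_{max}$ and independent of $d_{min}$. Since $\lfloor\cdot\rfloor$ and $\lceil\cdot\rceil$ are nondecreasing maps, these monotonicities transfer to $\overline{N}$, which yields precisely (a) and (b) after splitting on the sign of $\omega_{\delta}$.

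I do not expect any real obstacle here; the only point requiring a brief sanity check is the borderline subcase of Theorem~\ref{TPROP21a}-a), in which $\rho_T\in\mathbb{N}$ and $\overline{N}=\rho_T$ rather than a rounded value, but this clearly preserves the monotonicity claims, so no adjustment to the argument is needed. A second minor point is that the monotonicities are only strict away from the plateau where the defining $\max$ in (\ref{rhoT}) or (\ref{lamT}) is attained by the constant $1$; this is irrelevant because the hypotheses under which $\overline{N}$ is nontrivially described already place us off that plateau.
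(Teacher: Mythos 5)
Your proposal is correct and follows essentially the same route as the paper, whose proof is a one-line appeal to the formulas $\overline{N}=\lfloor\rho_T\rfloor$ or $\lceil\rho_T\rceil$ (for $\omega_{\delta}>0$) and $\overline{N}=\lceil\lambda_T\rceil$ (for $\omega_{\delta}<0$) together with the definitions (\ref{rhoT}) and (\ref{lamT}); your explicit tracking of the monotonicity of $\varphi_T$ in each parameter simply spells out what the paper leaves implicit.
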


\begin{proof}
It follows from the expressions $\overline{N} = \lfloor \rho_T \rfloor$ or $\lceil \rho_T \rceil$ with $\rho_T$ given by (\ref{rhoT}), when $\omega_{\delta} > 0,$
and $\overline{N} = \lceil \lambda_T \rceil$  with $\lambda_T$ given by (\ref{lamT}), when $\omega_{\delta} < 0.$
\end{proof}

For the reader's convenience, we have summarized the complete algorithm for the resolution of the problem $(P_2)$ in Table \ref{solP2}.

\begin{table}
\begin{center}
\begin{tabular}{| c | }
\hline
ALGORITHM FOR SOLVING $(P_2)$ \\
\hline
\hline
DATA: $\alpha_{T}, \beta_{T}, \alpha_{0}, \beta_{0}, d_{min},
d_{max}, \delta$ and $\gamma_{T}$ \\
(all positive, $d_{min} < d_{max}$ and $\delta \leq 1$) \\
\hline
CALCULATE:  $\omega_{\delta} = \dfrac{\alpha_T}{\beta_T} - \dfrac{\alpha_0}{\beta_0 \delta},$
$\lambda_{T} = max\left\lbrace  1, \dfrac{\gamma_{T}}{\varphi_{T}(d_{max})} \right\rbrace$ and \\
$\rho_{T} = max\left\lbrace  1, \dfrac{\gamma_{T}}{\varphi_{T}(d_{min})} \right\rbrace,$
with $\varphi_{T}(r)=\alpha_{T} r + \beta_{T} r^{2}.$ \\
\hline
IF $\rho_T = 1$, the pair $(\overline{N}, \overline{d}^{\overline{N}})  = (1,d_{min})$ is the UNIQUE SOLUTION of $(P_{2})$.\\
\hline
IF $\rho_T > 1$ and $\lceil \lambda_T \rceil = \lfloor  \rho_T \rfloor$, the UNIQUE SOLUTION of $(P_2)$ \\ is the pair
$(\overline{N}, \overline{d}^{\overline{N}})$ with $\overline{N} = \lceil \rho_T \rceil$  and $\overline{d}^{\overline{N}} = (d_{min},...,d_{min}).$ \\
\hline
IF  $\rho_T \in \mathbb{N}$, $\rho_T \geq 2$, $\lceil \lambda_T \rceil < \lfloor  \rho_T \rfloor$ and $\omega_{\delta} > 0,$  the pair $(\overline{N}, \overline{d}^{\overline{N}})$ \\
with  $\overline{N} = \rho_T$ and $\overline{d}^{\overline{N}} = (d_{min},...,d_{min})$ is the UNIQUE SOLUTION of $(P_{2})$.\\
\hline
IF  $\rho_T \not\in \mathbb{N},$ $\lceil \lambda_T \rceil < \lfloor  \rho_T \rfloor$ and $\omega_{\delta} > 0,$ \\ take $(\overline{N}_1, \overline{d}_1^{\overline{N}_1}),$
with  $\overline{N}_1 =\lfloor \rho_T \rfloor$ and $\overline{d}_1^{\overline{N}_1} = (\overline{d}_{1},...,\overline{d}_{1})$
\\  where $\overline{d}_{1}=\dfrac{-\alpha_{T} \overline{N}_1 + \sqrt{(\alpha_{T}\overline{N}_1)^{2}+4\beta_{T} \overline{N}_1 \gamma_{T}}}{2\beta_{T} \overline{N}_1}$.
\\
Also take $\overline{N}_2 = \lceil \rho_T \rceil$  and $\overline{d}_2^{\overline{N}_2} = (d_{min},...,d_{min})$. \\
CALCULATE: $E_{OAR}( \overline{N}_1,\overline{d}_1^{\overline{N}_1})$  and $E_{OAR}( \overline{N}_2,\overline{d}_2^{\overline{N}_2}).$ \\
A SOLUTION of $(P_2)$ is the pair $(\overline{N}, \overline{d}^{\overline{N}})$ that minimizes $E_{OAR}$ between them.\\
\hline
IF $\rho_T > 1,$ $\lceil \lambda_T \rceil < \lfloor  \rho_T \rfloor$ and $\omega_{\delta} < 0,$ \\ take $\overline{N }= \lceil \lambda_T \rceil$ and CALCULATE $M = \dfrac{\overline{N} \varphi_{T}(d_{max})-\gamma_{T}}{\varphi_{T}(d_{max})-\varphi_{T}(d_{min})}$. \\
IF $M  \in \mathbb{N} \cup \{0\}$, take $K = M$ and $\overline{d}^{\overline{N}} = (\underbrace{d_{min},...,d_{min}}_{K},\underbrace{d_{max},...,d_{max}}_{\overline{N}-K})$. \\
IF $M  \not\in \mathbb{N} \cup \{0\}$, take $K = \lfloor M \rfloor$ and $\overline{d}^{\overline{N}} = (\underbrace{d_{min},...,d_{min}}_{K},d^{*},\underbrace{d_{max},...,d_{max}}_{\overline{N}-K-1}),$ \\ with $d^{*} > 0$ and $\varphi_{T}(d^{*})= \gamma_{T} - K \varphi_{T}(d_{min})-(\overline{N}-K-1) \varphi_{T}(d_{max})$. \\
A SOLUTION of $(P_2)$ is the pair $(\overline{N}, \overline{d}^{\overline{N}}).$\\
\hline
IF $\rho_T  > 1$, $\lceil \lambda_T \rceil < \lfloor  \rho_T \rfloor$ and $\omega_{\delta} = 0,$ ANY FEASIBLE PAIR $(\overline{N},d)$  such that \\
$E_T(\overline{N},d) = \gamma_T$ is a SOLUTION for $(P_2)$. \\
In particular, the pairs $(\overline{N}, \overline{d}^{\overline{N}})$ with $\overline{N} \in \{\lceil \lambda_T \rceil, \ldots, \lfloor  \rho_T \rfloor\}$ and $\overline{d}^{\overline{N}} = (\overline{d}_{1},...,\overline{d}_{1}),$
\\  where $\overline{d}_{1}=\dfrac{-\alpha_{T} \overline{N} + \sqrt{(\alpha_{T}\overline{N})^{2}+4\beta_{T} \overline{N} \gamma_T}}{2\beta_T \overline{N}}.$\\
\hline
\end{tabular}
\caption{Complete solution for problem $(P_2)$ in algorithmic form.}\label{solP2}
\end{center}
\end{table}

 \subsection{Equivalent treatments}
 \label{EQTR}

 We will finish this section by mentioning another application of previous results. Let us begin by introducing biologically equivalent treatments.
 Two treatments of radiotherapy with doses $d_{1},...,d_{N}$ and $\tilde{d}_{1},...,\tilde{d}_{\tilde{N}}$ are said to be biologically equivalent for a certain tumor with characteristic parameters $\alpha_{T}$ and $\beta_{T}$ when they have the same effect, that is,
$$ \displaystyle \alpha_{T} \sum_{i=1}^{N} d_{i}+\beta_{T} \sum_{i=1}^{N} d_{i}^{2}=\displaystyle \alpha_{T} \sum_{i=1}^{\tilde{N}} \tilde{d}_{i}+\beta_{T} \sum_{i=1}^{\tilde{N}} \tilde{d}_{i}^{2}. $$
When all the doses are equal for both treatments, previous concept leads to biologically equivalent doses (BED) (see \cite{Bortfeld2015}), that can be calculated very easily (see \cite{ref8} for instance) from the equality $$ N(\alpha_T d + \beta_T d^2) =
\tilde{N}(\alpha_{T}\tilde{d}+\beta_{T} \tilde{d}^2).$$

A clearly interesting question is to determine (among all the equivalent treatments) which one uses the lowest total dose. As in this case we are not paying attention to the effect of radiation on the OAR, this can be formulated in mathematical terms as the following optimization problem:
$$ (P_3)\left\{
\begin{array}{ll}
\begin{aligned}
& \textrm{Minimize } \sum_{i=1}^{N} d_{i}, \\
& \textrm{subject to } N \in \mathbb{N}, d \in \mathbb{R}^{N} \textrm{ such that  } \\
& \alpha_{T} \sum_{i=1}^{N} d_{i} + \beta_{T} \sum_{i=1}^{N} d_{i}^{2} = \gamma_T, \\
& d_{min}\leq d_{i}\leq d_{max} \textrm{,  } i=1,...,N.
\end{aligned}
\end{array}
\right. $$

After our study, we deduce that the solutions to this problem are treatments of type (\ref{E710}) or (\ref{E730}), see Theorem \ref{teo22}-ii).
Let us mention that these are the same than in the case $\omega_{\delta} < 0$, see Theorem \ref{TPROP21b}, and consequently the hypofractionated protocols
are also optimal in this sense.

\section{Conclusions}

In this work, we have derived the analytical expressions for the optimal total number of radiations $N$ and their specific doses $d$ for problems $(P_1)$ and  $(P_2)$.
They are presented  in Tables \ref{solP1} and \ref{solP2} in algorithmic form (see also Theorems \ref{TPROP2-}, \ref{TPROP2}, \ref{TPROP3}, \ref{TPROP21a} and \ref{TPROP21b})
and there exists a clear parallelism between the structure of the solutions for both problems. We have proved that they essentially depend on the sign of the quantity
$$ \omega_{\delta} = \dfrac{\alpha_T}{\beta_T} - \dfrac{\alpha_0}{\beta_0 \delta}.$$
For fixed $N$, this fact is well known in the literature and it has been reported several times in different frameworks (see for instance \cite{Mizuta2012}, \cite{Bertuzzi2013} and \cite{Bruni2019}).
Moreover, this is consistent with some clinical findings as noted in \cite{Mizuta2012}.

When $\omega_{\delta} > 0$, we have shown that the optimal number of doses $N$ are $\lfloor \rho_0 \rfloor$ for $(P_1)$ and $\lfloor \rho_T \rfloor$ or $\lceil \rho_T \rceil$ for $(P_2)$,
the upper values of their ranges of interest (i.e. hyperfractionated type treatments) with equal doses;
while in the case $\omega_{\delta} < 0,$ the optimal values of $N$ are $\lfloor \lambda_0 \rfloor$ or $\lceil \lambda_0 \rceil$ for $(P_1)$ and $\lfloor \lambda_T \rfloor$ for $(P_2)$,
the lower values of those ranges (i.e. hypofractionated type treatments). In this last case, let us stress that not all doses
have to be maximum; in fact, some of them may be minimum and at most one of them can take an intermediate value. For non-uniform protocols, the lack of uniqueness for the solution can be used to our benefit, because the doses can be administered in any order depending on various external factors such as the condition of the patient. The study concerning the derivation of the optimal number of doses $N$ had already been performed for example in \cite{Jones1995} in the hyperfractionated case, but (as far as we know) it is completely new for the hypofractionated case.

Let us emphasize again that the calculations to apply all these results are elementary and can be carried out using a pocket calculator from the initial data.
Of course, the algorithms described in Tables \ref{solP1} and \ref{solP2} can be implemented quite straightforwardly in any platform using any programming language to make them more accessible.

We hope that these theoretical results may provide useful insights to address more complete models (including repopulation terms and multiple OAR)
and that, ultimately, will lead to some improvement (however small) in clinical practice, due to the impact it would have on the large number of patients who could benefit.

\section*{Acknowledgement}
The authors would like to express their gratitude to Prof. Cecilia Pola (University of Cantabria) for fruitful discussions and helpful comments.

\section*{Appendix $1$: Proof of Theorem \ref{TPROP3}}

The expression given in $i)$ is derived exactly as in Theorem \ref{TPROP2-} for the values $N \leq \lambda_0$. Taking into account that $d^{*}$ can be very close to $d_{max}$ or $d_{min}$,
item $ii)$ can be seen as a kind of special case of $iii)$. So, we will focus on proving $iii)$ that it is the most complicated case. To that end, it is enough to show that if $(\underbrace{d_{min},...,d_{min}}_{K},d^{*},\underbrace{d_{max},...,d_{max}}_{N-K-1})$ is a solution for $(P_{1}^N)$ and $(\underbrace{d_{min},...,d_{min}}_{\tilde{K}},\tilde{d}^{*},\underbrace{d_{max},...,d_{max}}_{N-\tilde{K}})$ is a solution for $(P_{1}^{N+1})$, with $N > \lambda_{0}$, then the following relation holds
\begin{equation}
K d_{min}+ d^{*}+(N-K-1)d_{max} \leq \tilde{K} d_{min}+ \tilde{d}^{*}+(N-\tilde{K})d_{max}.
\label{E600}
\end{equation}
Together with (\ref{eq1}) and the assumption $\omega_{\delta} < 0$, this implies $iii)$, because (\ref{E600}) means that the values of the objective function $E_T$ at the solutions are decreasing with $N$ and therefore, the maximum value will be attained at $\overline{N} = \lceil \lambda_0 \rceil$, the lowest value of $N$ in the set $(\lambda_0,\rho_0] \cap \mathbb{N}$.

Comparing their expressions in the form (\ref{K2}) with $N+1$ and $N$, resp., we conclude that $\tilde{K} \geq K+1$. Hence, if we denote $K_0 = \tilde{K} - K \in \mathbb{N}$, the inequality (\ref{E600}) can be written as
\begin{equation}
d^{*}-\tilde{d}^{*} \leq K_0 d_{min}+ (1-K_0)d_{max}.
\label{E610}
\end{equation}
Let us recall that $d^\star$ satisfies (\ref{E400}) and $\tilde{d}^{*}$ verifies
\begin{equation}
\varphi_{0}(\tilde{d}^{*})= \gamma_{OAR} - \tilde{K} \varphi_{0}(d_{min})-(N-\tilde{K}) \varphi_{0}(d_{max}).
\label{E620}
\end{equation}

We will show that (\ref{E610}) holds dividing the argumentation in three cases:

\underline{Case 1.- Suppose that $K_0 d_{min}+ (1-K_0)d_{max} \leq 0$}. We choose the point
$$(d_1,...,d_N) = (\underbrace{\dfrac{K_0}{K_0-1}d_{min},...,\dfrac{K_0}{K_0-1}d_{min}}_{K_0-1},\underbrace{d_{min},...,d_{min}}_{\tilde{K}-K_0},\tilde{d}^{*},\underbrace{d_{max},...,d_{max}}_{N-\tilde{K}}),$$ that under the assumption satisfies  the bounds restrictions and
$$E_{OAR}(N,d) = \alpha_0 \delta \left( \tilde{K}d_{min}+ \tilde{d}^{*} + (N-\tilde{K}) d_{max}\right) + $$
$$ + \beta_0 \delta^2 \left(\dfrac{(K_0)^2}{K_0-1}d_{min}^2+ (\tilde{K}-K_0) d_{min}^2 + (\tilde{d}^{*})^2 + (N-\tilde{K}) d_{max}^2  \right) \geq $$
$$  \geq \tilde{K}\varphi_0(d_{min}) + \varphi_0(\tilde{d}^{*}) + (N-\tilde{K}) \varphi_0(d_{max})  = \gamma_{OAR}.$$
This means that it is feasible for the problem $(\ref{PEQ-})$. Taking into account  Remark \ref{R2}, we get (\ref{E610}).

\underline{Case 2.- Suppose now that $K_0 d_{min}+ (1-K_0)d_{max} > 0$} and furthermore

\underline{$K_0 \varphi_0(d_{min})+ (1-K_0)\varphi_0(d_{max}) \leq 0$}. We can argue similarly choosing
\begin{equation}
(d_1,...,d_N) = (\underbrace{d_{min},...,d_{min}}_{K},\tilde{d}^{*},\underbrace{d_{max},...,d_{max}}_{N-K-1}).
\label{E625}
\end{equation}
Due to (\ref{E620}) and the hypothesis we have
$$E_{OAR}(N,d) = K\varphi_0(d_{min}) + \varphi_0(\tilde{d}^{*}) + (N-K-1) \varphi_0(d_{max}) = $$
$$ = \gamma_{OAR} - K_0\varphi_0(d_{min}) + (K_0-1) \varphi_0(d_{max})  \geq \gamma_{OAR}.$$
Again, we have a feasible point for the problem $(\ref{PEQ-})$ and therefore we deduce  $d^{*} \leq \tilde{d}^{*}$ and hence (\ref{E610}), because
$$ d^{*} - \tilde{d}^{*} \leq 0 < K_0 d_{min}+ (1-K_0)d_{max}.$$

\underline{Case 3.- Finally, suppose that $K_0 d_{min}+ (1-K_0)d_{max} > 0$} and moreover

\underline{$K_0 \varphi_0(d_{min})+ (1-K_0)\varphi_0(d_{max}) > 0$}.
Here, we introduce the auxiliary function defined for $s \in [0,1]$ by

\small
\[ G(s) = \sqrt{\alpha_0^2 + 4\beta_0 \left(\gamma_{OAR}-(\tilde{K} - s K_0)\varphi_{0}(d_{min}) -(N-\tilde{K}+s(K_0-1))\varphi_{0}(d_{max})\right)}. \]
\normalsize
Solving the quadratic equations (\ref{E400})  and (\ref{E620}), it is easy to derive that
\[ d^* = \dfrac{-\alpha_0 + G(1)}{2\beta_0 \delta},  \ \ \tilde{d}^* = \dfrac{-\alpha_0 + G(0)}{2\beta_0 \delta}.\]
Using the Mean Value Theorem, we deduce that there exists $\theta \in (0,1) $ such that
\[ d^* -\tilde{d}^* = \dfrac{G(1)-G(0)}{2\beta_0 \delta} =  \dfrac{G'(\theta)}{2\beta_0 \delta} = \dfrac{K_0 \varphi_0(d_{min})+ (1-K_0)\varphi_0(d_{max})}{G(\theta) \delta}.\]
Therefore, the inequality (\ref{E610}) is equivalent to
\begin{equation}
\dfrac{K_0 \varphi_0(d_{min})+ (1-K_0)\varphi_0(d_{max})}{K_0 d_{min}+ (1-K_0)d_{max}} \leq G(\theta) \delta.
\label{E630}
\end{equation}
Under the present hypotheses, the function $G$ is strictly increasing and, since $\theta$ is an unknown value in $(0,1)$, we will verify that (\ref{E630}) is valid if  it holds for $\theta = 0$.
On the other hand, the value $K_0 \in \mathbb{N}$ is also unknown, but we can verify that the function
\[  F(m) = \dfrac{m \varphi_0(d_{min})+ (1-m)\varphi_0(d_{max})}{m d_{min}+ (1-m)d_{max}} = \]
\[ = \alpha_0 \delta + \beta_0 \delta^2\left( \dfrac{m d_{min}^2+ (1-m)d_{max}^2}{m d_{min}+ (1-m)d_{max}}\right) ,\]
is strictly decreasing, because
\[  F'(m) = \beta_0 \delta^2\dfrac{d_{min}d_{max}(d_{min}-d_{max})}{(m d_{min}+ (1-m)d_{max})^2}< 0.\]
Hence, the inequality (\ref{E630}) will be true if $F(1) \leq G(0)\delta.$ We conclude by noting that
\[ \tilde{d}^* \in [d_{min},d_{max}] \Longleftrightarrow \varphi_0(\tilde{d}^*) \in [\varphi_0(d_{min}),\varphi_0(d_{max})]  \Longleftrightarrow  \]
\[ \stackrel{(\ref{E620})}{\Longleftrightarrow} \gamma_{OAR} - \tilde{K} \varphi_{0}(d_{min})-(N-\tilde{K}) \varphi_{0}(d_{max})  \in [\varphi_0(d_{min}),\varphi_0(d_{max})].  \]
Then,
\[ G(0) = \sqrt{\alpha_0^2 + 4\beta_0 \left(\gamma_{OAR}- \tilde{K} \varphi_{0}(d_{min}) -(N-\tilde{K})\varphi_{0}(d_{max})\right)} \geq  \]
\[ \geq \sqrt{\alpha_0^2 + 4\beta_0 \varphi_{0}(d_{min}}) \geq \alpha_0 + \beta_0 \delta d_{min}= \dfrac{F(1)}{\delta}, \]
as asserted.

\section*{Appendix $2$: Proof of Theorem \ref{TPROP21a}}

It follows the same lines to that of Theorem \ref{TPROP2}.

\underline{Case a).- Assume $\rho_T \in \mathbb{N}$, $\rho_T \geq 2.$}

As usual, we divide the interval for feasible values of $N$ in two parts: $[\lambda_T,\rho_T) \cap \mathbb{N}$ and $[\rho_T,+\infty)\cap \mathbb{N}.$

In order to study the dependence with respect to $N$ in the interval $[\lambda_T,\rho_T)$, thanks to Proposition \ref{PROP2} (with $\omega_{\delta} > 0$) and (\ref{E700}), it is enough to consider the auxiliary function
$$\psi_1(N)= N \overline{d}_{1}=\dfrac{-\alpha_{T} N + \sqrt{(\alpha_{T} N)^{2}+4\beta_T N \gamma_T}}{2\beta_T}.$$
Once more, it follows easily that $\psi_1(N)$ is an strictly increasing function. Since we are assuming $\rho_T \in \mathbb{N}$ and
$\rho_T \geq 2$, then $\psi_1$ will take its maximum value in the set $[\lambda_T,\rho_T) \cap \mathbb{N}$ at $N_1 = \rho_T - 1$.
Therefore, the candidate for solution to problem $(P_2)$ is given by the pair $(N_1,\overline{d}^{N_1})$ with $\overline{d}^{N_1}= (\overline{d}_{1},...,\overline{d}_{1}),$ where $\overline{d}_{1}$ is given by
\begin{equation}
\overline{d}_1= \dfrac{-\alpha_T N_1 + \sqrt{(\alpha_T N_1)^{2}+4\beta_T N_1 \gamma_T}}{2\beta_{T} N_1}.
\label{E7510}
\end{equation}

On the other hand, in the interval $[\rho_T,+\infty)$, we know that the other candidate for solution to problem $(P_2)$ is given by the pair $(N_2,\overline{d}^{N_2})$ with $N_2 = \rho_T$ and
$\overline{d}^{N_2} =(d_{min},...,d_{min}).$

To derive that $(N_2,\overline{d}^{N_2})$ is the unique solution to problem $(P_{2})$, we will show that
\begin{equation}
E_{OAR}(N_2,\overline{d}^{N_2}) < E_{OAR}(N_1,\overline{d}^{N_1}).
\label{E7530}
\end{equation}

Following the same idea to that of the proof of Theorem \ref{TPROP2}, we introduce the auxiliary function
\[ H_1(x) = N_1(x\overline{d}_1+\overline{d}_1^2)-N_2(x d_{min}+d_{min}^2), \ \ \ x \in [\dfrac{\alpha_0}{\beta_0 \delta}, +\infty).\]
Taking into account (\ref{E7510}) and that $N_2 \varphi_T(d_{min}) = \gamma_T$ (by the definition of $\rho_T$), it can be checked that $H_1'(x) = N_1\overline{d}_1-N_2 d_{min} < 0,$
since $N_1 < N_2.$

Using that also $\gamma_T = N_1\varphi_T(\overline{d}_1)$, we get that $H_1\left(\dfrac{\alpha_T}{\beta_T}\right) = 0$
and from the assumption $\omega_{\delta} >0$ (see (\ref{Ew})), it follows that $H_1\left(\dfrac{\alpha_0}{{\beta_0 \delta}}\right) > 0$, which is equivalent to (\ref{E7530}).

\underline{Case b).- Assume $\rho_T \not\in \mathbb{N}$}.
Here, the optimal value of $N$ in the interval $[\lambda_T,\rho_T)$ is  $N_1 = \lfloor  \rho_T \rfloor$ and $\overline{d}^{N_1}= (\overline{d}_{1},...,\overline{d}_{1})$ with $\overline{d}_{1}$ given by (\ref{E7510}). In the interval $[\rho_T,+\infty)$, the other candidate is $N_2 = \lceil \rho_T \rceil$ with $\overline{d}^{N_2} =(d_{min},...,d_{min}).$

When $\rho_T \not\in \mathbb{N},$ any of them can provide the unique solution to problem $(P_{2})$ (see for instance Example \ref{EX6}).

\section*{Appendix $3$: Proof of Theorem \ref{TPROP21b}}

When $\omega_{\delta} < 0$, it is still true that $N_2 = \lceil \rho_T \rceil$ and $\overline{d}^{N_2} =(d_{min},...,d_{min}).$
Arguing as in Appendix $1$, the candidate when $N$ runs $[\lambda_T,\rho_T) \cap \mathbb{N}$ is
$N_1 = \lceil \lambda_T \rceil$ with $\overline{d}^{N_1}$ given by Theorem \ref{TPROP21b}-$a)$ or $b)$ and $\overline{N} = N_1$,
thanks to Theorem \ref{teo22}$-ii)$. We will conclude by showing that
\begin{equation}
 E_{OAR}(N_1,\overline{d}^{N_1}) \leq E_{OAR}(N_2,\overline{d}^{N_2}).
\label{E815}
\end{equation}
Let us argue with the expression $b)$ for $\overline{d}^{N_1}$, because (as we have pointed out before) the value $d^*$ can be very close to $d_{min}$ or $d_{max}$
and hence item $a)$ can be seen as a special case of $b)$.
Therefore, the inequality (\ref{E815}) is equivalent to
\begin{equation}
K\varphi_0(d_{min})+\varphi_0(d^\star)+(N_1-K-1)\varphi_0(d_{max}) \leq N_2\varphi_0(d_{min}).
\label{E820}
\end{equation}

For proving (\ref{E820}), we consider again a linear function such as
$$ H_2(x) = (N_2-K)(x d_{min}+d_{min}^2) -(xd^* +(d^*)^2)-(N_1-K-1)(x d_{max}+d_{max}^2).$$
By construction, we know that
$$K\varphi_T(d_{min})+\varphi_T(d^\star)+(N_1-K-1)\varphi_T(d_{max}) = \gamma_T \leq N_2\varphi_T(d_{min}). $$
This is equivalent to say that $H_2\left(\dfrac{\alpha_T}{\beta_T}\right) \geq 0$.

If $H_2$ is an increasing function, since $\omega_{\delta} < 0$, we will have
$$H_2\left(\dfrac{\alpha_0}{\beta_0 \delta}\right) \geq H_2\left(\dfrac{\alpha_T}{\beta_T}\right) \geq 0,$$ which gives (\ref{E820}).

So, taking into account that
$$ H'_2(x) = (N_2-K)d_{min}- d^*-(N_1-K-1)d_{max},$$
let us finish the proof by showing that $H'_2(x) \geq 0$.

If $N_2d_{min} > N_1d_{max},$ this is true straightforwardly, because we know that $N_1d_{max} > K d_{min}+d^*+(N_1-K-1)d_{max}.$

When $N_2d_{min} \leq N_1d_{max},$ we can argue as in the proof of Theorem \ref{TPROP3}, taking the point
$$\tilde{d}= (d_1,...,d_{N_1}) = (\dfrac{N_2}{N_1}d_{min},...,\dfrac{N_2}{N_1}d_{min}),$$  that satisfies  the bounds restrictions and
$$E_{T}(N_1,\tilde{d}) = \alpha_T N_2 d_{min}+\beta_T \dfrac{(N_2)^2}{N_1}d_{min}^2 \geq N_2\varphi_T(d_{min}) \geq \gamma_{T}.$$
This means that it is feasible for the problem $(P_2^{N_1,-})$. Taking into account  that $(N_1,\overline{d}^{N_1})$ is a
solution for that problem, see Proposition \ref{PROP2}-$ii)$ and Remark \ref{R2}, we get $H'_2(x) \geq 0$.

\end{document}